\documentclass{amsart}
\usepackage{verbatim}
\usepackage{rotating} 
\usepackage{amssymb}

\hyphenation{arch-i-med-e-an}


\newtheorem{theorem}{Theorem}[section]

\newtheorem{prop}[theorem]{Proposition}

\newtheorem{lemma}[theorem]{Lemma}

\newtheorem{cor}[theorem]{Corollary}
\newtheorem{question}[theorem]{Question}
\newtheorem{definition}[theorem]{Definition}

\newtheorem{conjecture}[theorem]{Conjecture}

\theoremstyle{plain}
\numberwithin{equation}{theorem}

\theoremstyle{remark}

\newtheorem{remark}[theorem]{Remark}

\newcommand{\C}{{\mathbb C}}
\newcommand{\F}{{\mathbb F}}
\newcommand{\Fp}{{\mathbb F}_p}
\newcommand{\Q}{{\mathbb Q}}

\newcommand{\N}{{\mathbb N}}

\newcommand{\bP}{{\mathbb P}}

\newcommand{\bA}{{\mathbb A}}

\newcommand{\lra}{\longrightarrow}
\newcommand{\cO}{\mathcal{O}}

\begin{document}

\title[Dynamical Mordell-Lang Problem]{The Dynamical Mordell-Lang problem}

\author{J.~P.~Bell}
\address{
Jason Bell\\
Department of Pure Mathematics\\
University of Waterloo\\
Waterloo, ON N2L 3G1\\
Canada 
}
\email{jpbell@uwaterloo.ca}

\author{D.~Ghioca}
\address{
Dragos Ghioca\\
Department of Mathematics\\
University of British Columbia\\
Vancouver, BC V6T 1Z2\\
Canada
}
\email{dghioca@math.ubc.ca}

\author{T.~J.~Tucker}
\address{
Thomas Tucker\\
Department of Mathematics\\
University of Rochester\\
Rochester, NY 14627\\
USA
}
\email{ttucker@math.rochester.edu}

\begin{abstract}
Let $X$ be a Noetherian space, let $\Phi:X\lra X$ be a continuous function, let $Y\subseteq X$ be a closed set, and let $x\in X$. We show that the set $S:=\{n\in\N\colon \Phi^n(x)\in Y\}$ is a union of at most finitely many arithmetic progressions along with a set of Banach density zero. In particular, we obtain that given any quasi-projective variety $X$, any rational map $\Phi:X\lra  X$, any subvariety $Y\subseteq X$, and any point $x\in X$ whose orbit under $\Phi$ is in the domain of definition for $\Phi$, the set $S$ is
 a finite union of arithmetic progressions together with a set of Banach density zero. We prove a similar result for the backward orbit of a point.
\end{abstract}

\maketitle

\section{Introduction}

The classical Mordell-Lang question (solved by Faltings \cite{Faltings} in the case of abelian varieties and by  Vojta \cite{Vojta} in the case of semiabelian varieties) asks for a description of the algebraic relations between points in a given finitely generated subgroup of a semiabelian variety $G$ defined over $\C$. More precisely, the Mordell-Lang problem asks that the intersection between any subvariety $Y$ of $G$ with a finitely generated subgroup $\Gamma$ of $G(\C)$ is a finite union of subgroups of $\Gamma$. Therefore, Faltings and Vojta's results assert that there exists a robust \emph{Mordell-Lang principle}   which governs the geometry of a semiabelian variety.

If we interpret $\Gamma$ as the image of the identity $0\in G(\C)$ under a finitely generated subgroup of translations on $G$, then we obtain a reformulation of the Mordell-Lang question in the context of algebraic dynamics. One may further generalize and ask for a description of the intersection between any subvariety $Y$ of $G$ with the orbit of any point $\alpha\in G(\C)$ under a finitely generated commutative semigroup $S$ of endomorphisms of $G$. However, this generalization turns out to have surprising answers; in particular, if the endomorphisms from $S$ are group homomorphisms, then the Mordell-Lang principle applies only if each endomorphism from $S$ has diagonalizable Jacobian at the identity of $G$ (see \cite{JNT-Bordeaux} for a full discussion of this problem). However, if we consider the case of a cyclic semigroup $S$, i.e. $S$ is generated by a single endomorphism of $G$, then the Mordell-Lang principle holds for semiabelian varieties, and furthermore it is expected to hold in much higher generality. The following Conjecture was formulated in \cite{GT-JNT}.

\begin{conjecture}
\label{DML}
Let $X$ be any quasi-projective variety defined over $\C$, let $Y$ be any subvariety of $X$, let $\alpha\in Y(\C)$, and let $\Phi$ be an endomorphism of $X$. Then the set of $n\in\N$ such that $\Phi^n(\alpha)\in Y(\C)$ is a finite union of arithmetic progressions.
\end{conjecture}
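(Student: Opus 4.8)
The plan is to push the $p$-adic interpolation method underlying the Skolem--Mahler--Lech theorem, together with its geometric refinements (including the known treatment of the \'etale case), all the way to the general conjecture. First I would reduce to an arithmetic situation: by spreading out, $X$, $Y$, $\Phi$, and $\alpha$ are defined over a finitely generated $\Z$-algebra $R$, and after inverting finitely many elements one can choose a maximal ideal $\fp\subset R$ with finite residue field $\F_q=R/\fp$ for which the reductions $\bar X,\bar Y,\bar\Phi$ are well behaved: $\bar X$ smooth along the reduced forward orbit, $\bar\Phi$ a morphism, $\dim\bar Y=\dim Y$. Writing $\fo_v$ for the completion of $R_\fp$ and $\fm_v$ for its maximal ideal, the orbit $\{\Phi^n(\alpha)\}_{n\ge0}$ lies in $X(\fo_v)$; since $\bar X(\F_q)$ is finite, some residue point is visited infinitely often, so after replacing $\Phi$ by an iterate $\Phi^N$ and $\alpha$ by some $\Phi^i(\alpha)$ we may assume that $\Phi$ carries the residue disk $U\cong\fm_v^{\dim X}$ about $\alpha$ into itself, fixing its center modulo $\fp$.

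The heart of the matter is the \emph{$p$-adic arc lemma}. In the \'etale case $\Phi|_U$ is an analytic automorphism of $U$ with a fixed point, and after passing to a further iterate it is congruent to the identity to high $\fp$-adic order; the iterates $\Phi^n|_U$ then converge coefficientwise and interpolate to a $p$-adic analytic map $F\colon\Z_p\times U\to U$ with $F(n,x)=\Phi^n(x)$ for all $n\in\N$. Consequently $S=\{n\in\N:\Phi^n(\alpha)\in Y\}$ consists of the $n\in\N\subset\Z_p$ at which the finitely many one-variable analytic functions $n\mapsto g_j(F(n,\alpha))$ all vanish, the $g_j$ being local equations for $Y$ at the residue point. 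By $p$-adic Weierstrass preparation (Strassmann's theorem), each such function is either identically $0$ on $\Z_p$---giving all of $\N$, hence an arithmetic progression once the reduction to $\Phi^N$ is undone---or has finitely many zeros. Intersecting and unwinding the iterate reduction shows $S$ is a finite union of arithmetic progressions plus a finite set, which is the conclusion of Conjecture~\ref{DML}.

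The genuine obstacle is removing the \'etale hypothesis: one must control orbits meeting the ramification locus of $\Phi$, and this is precisely the gap between what the present paper proves---a finite union of progressions up to a set of Banach density zero---and the full conjecture. I would attack it along three fronts. First, a local analysis at a ramified fixed residue point: if the differential of (an iterate of) $\Phi$ there has all eigenvalues of small $p$-adic absolute value, the same contraction estimates produce an interpolating $F$, so the difficulty localizes to ``neutral'' residue points where some multiplier is a root of unity. Second, desingularization: blow up the ramification locus so as to lift $\Phi$ to a morphism, or at least a correspondence, on a modification $\widetilde X$ on which the relevant orbit lands in the \'etale locus---the danger being that the orbit falls into the exceptional divisor. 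Third, for $X$ a curve (in particular $\bP^1$), supplement the $p$-adic argument with equidistribution and canonical-height estimates to exclude the ramified bad case outright. A complete proof would most plausibly combine the first two: the \'etale case together with a controlled resolution of the ramified locus.
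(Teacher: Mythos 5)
The statement you are addressing is a \emph{conjecture}, not a theorem of the paper: the authors explicitly state that Conjecture~\ref{DML} is open in general (only the \'etale case, proved in \cite{AJM}, and scattered special ramified cases are known), and the paper itself establishes only the weaker assertion that $S$ is a finite union of arithmetic progressions together with a set of Banach density zero. Your proposal does not close this gap. Its first two paragraphs are a correct recapitulation of the known Skolem--Mahler--Lech strategy (spreading out, choosing a good prime, passing to an iterate fixing a residue disk, $p$-adic interpolation of the iterates, Strassmann/Weierstrass preparation), but this is exactly the argument of \cite{AJM} and it is valid only under the \'etale hypothesis; your third paragraph openly concedes that removing that hypothesis is ``the genuine obstacle'' and offers a program rather than a proof. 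So the attempt, by its own admission, proves only what was already known.

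Moreover, the three proposed ``fronts'' each run into concrete, known obstructions rather than technical details to be filled in. For the first: at a ramified (in particular superattracting) fixed residue point the differential can vanish, and then there is no $p$-adic analytic map $F(n,x)$ interpolating the iterates --- the iterates are not congruent to the identity to increasing order, and the contraction estimates you invoke produce convergence of $\Phi^n(x)$ to the fixed point, not analyticity in $n$; this is precisely why the arc-lemma method fails there and why only special families (e.g.\ coordinatewise one-variable maps on $(\bP^1)^m$, handled in \cite{BGKTZ, GTZ-1, Xie} by entirely different tools) are known. For the second: blowing up the ramification locus does not in general make an endomorphism \'etale along the orbit, the lifted object is typically only a rational self-map or correspondence, and you have no mechanism to prevent the orbit closure from meeting the exceptional locus --- you flag this danger yourself but do not resolve it. The third front (heights and equidistribution on curves) is again a description of existing partial results, not a new argument. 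In short, the proposal reproduces the known \'etale case and a research program for the rest; it is not a proof of Conjecture~\ref{DML}, and no such proof should be expected here since the conjecture remains open.
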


We note that for an arithmetic progression we allow the possibility that its ratio equals $0$ (in which case it is constant); so, in Conjecture~\ref{DML} we allow the possibility of a finite intersection (which often is the case). 
Also we denote by $\cO_\Phi(\alpha)$ the orbit of $\alpha$ under $\Phi$, i.e. the set consisting of all $\Phi^n(\alpha)$ for nonnegative integers $n$ (as always in algebraic dynamics, we denote by $\Phi^n$ the $n$-th iterate of $\Phi$). In case of a rational self-map $\Phi$ we always work under the hypothesis that $\cO_\Phi(\alpha)$ is entirely contained in the domain of definition for $\Phi$.

A reformulation of Conjecture~\ref{DML} would be that if $Y$ is a subvariety of $X$ which contains infinitely many points of the form $\Phi^n(\alpha)$ (with $n\in\N$), then $Y$ must contain a positive dimensional periodic subvariety under the action of $\Phi$, which has nontrivial intersection with $\cO_{\Phi}(\alpha)$. This statement is in line with the classical Mordell-Lang problem, since in that case, if a subvariety $Y$ of a semiabelian variety contains infinitely many points from a finitely generated subgroup $\Gamma$, then $Y$ must contain a translate of an algebraic subgroup of positive dimension which has nontrivial intersection with $\Gamma$.

Conjecture~\ref{DML} was proved in \cite{AJM} for all \'{e}tale endomorphisms $\Phi$ of any quasi-projective variety $X$. The proof relies on constructing a $p$-adic analytic function which parametrizes the orbit of $\alpha$ under $\Phi$. This idea was pioneered by one of the authors in \cite{JPB} (see also \cite{Bell-Lagarias} for an extension of this method to orbits of subvarieties under an automorphism of an affine variety). We also note a similar construction of a $v$-adic parametrization for orbits of points under Drinfeld modules (done by two of the authors in \cite{GT-Compositio}). A crucial ingredient in constructing this $p$-adic analytic map is the fact that the orbit does not intersect the ramified locus for $\Phi$.  
However, the case of ramified self-maps $\Phi$ remains open in  general. Only special instances of Conjecture~\ref{DML} when the map $\Phi$ is ramified are known (see \cite{BGKTZ, BGHKST, GTZ-1, Xie} and also Wang \cite{Wang} for an extension of the Dynamical Mordell-Lang problem to analytic endomorphisms of the unit disk). In almost all known ramified cases, $\Phi$ is given by the coordinatewise action through one-variable rational maps on $(\bP^1)^m$, i.e. $\Phi(x_1,\dots,x_m)=(\varphi_1(x_1),\dots,\varphi_m(x_m))$ for some rational maps $\varphi_i$. In particular, very little is known for arbitrary endomorphisms of quasi-projective varieties, besides the result of Fakhruddin \cite{Fakhruddin}, who proved that the Dynamical Mordell-Lang Conjecture holds for \emph{generic}  endomorphisms of $\bP^n$. In this paper we obtain a very general result for Noetherian spaces in support of Conjecture~\ref{DML}. First we recall the definition of Banach density for subsets of $\N$, and then we define Noetherian topological spaces.

\begin{definition}
{\em Let $S$ be a subset of the natural numbers.  We define the \emph{Banach density} of $S$ to be
$$\delta(S):=\limsup_{|I|\to \infty} \frac{|S\cap I|}{|I|},$$ where the $\limsup$ is taken over intervals $I$ in the natural numbers. We say that a subset $S$ of the natural numbers has \emph{Banach density zero} if $\delta(S)=0$.}
\end{definition}

\begin{definition}
{\em Let $X$ be a topological space. We say that $X$ is \emph{Noetherian} if it satisfies the descending chain condition for its closed subsets, i.e., there exists no infinite descending chain of proper closed subsets. }
\end{definition}

\begin{theorem}
\label{main result}
Let $X$ be a Noetherian topological space, and let $\Phi:X\lra X$ be a continuous function. Then for each $x\in X$ and for each closed subset $Y$ of $X$, the set $S:=\{n\in\N\colon \Phi^n(x)\in Y\}$ is a union of at most finitely many arithmetic progressions along with a set of Banach density zero.
\end{theorem}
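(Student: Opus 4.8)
The plan is to argue by Noetherian induction on $X$, and then, with $X$, $\Phi$ and $x$ held fixed, by a further Noetherian induction on the closed set $Y$; both inductions are legitimate because the closed subsets of a Noetherian space satisfy the descending chain condition. Call a subset of $\N$ \emph{good} if it is a union of finitely many arithmetic progressions together with a set of Banach density zero; the good sets are stable under finite unions, under adding or deleting finite sets, and under $T\mapsto a+dT$, and $\delta(a+dT)=\delta(T)/d$. The first moves are reductions. If $\overline{\cO_\Phi(x)}\neq X$, then $\Phi$ restricts to a continuous self-map of the proper closed set $\overline{\cO_\Phi(x)}$ and we finish by the (outer) inductive hypothesis, since $\Phi^n(x)$ never leaves this set. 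So assume $\overline{\cO_\Phi(x)}=X$. The closed sets $Z_k:=\overline{\{\Phi^n(x):n\ge k\}}$ descend, hence stabilize to some $Z_\infty$; if $Z_\infty\subsetneq X$ we again finish by the inductive hypothesis applied to $\Phi|_{Z_\infty}$ and the base point $\Phi^{k_0}(x)$, after discarding finitely many initial indices. Thus we may assume $X=\overline{\cO_\Phi(x)}=Z_\infty$, so that every tail of the orbit is dense in $X$ and in particular $\overline{\Phi(X)}=X$.

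Next I would use the irreducible components to cut $X$ down. Write $X=W_1\cup\dots\cup W_s$ irredundantly. Since $\overline{\Phi(X)}=X$, each $\overline{\Phi(W_j)}$ is one of the $W_i$, and irredundancy forces $\Phi$ to permute the components dominantly: $\overline{\Phi(W_j)}=W_{\sigma(j)}$ for a permutation $\sigma$. A one-line computation then gives $\Phi(C)\subseteq C$ for the crossing locus $C:=\bigcup_{i\neq j}(W_i\cap W_j)$; since a single orbit point lying in $C$ would trap the whole tail of the orbit inside the proper closed set $C$, the orbit avoids $C$ entirely. Hence each $\Phi^n(x)$ lies in a unique component $W_{j(n)}$, and comparing with $\Phi^{n+1}(x)\in\overline{\Phi(W_{j(n)})}=W_{\sigma(j(n))}$ gives $j(n+1)=\sigma(j(n))$, so $j(\cdot)$ is periodic with some period $p$. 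If $s\ge 2$ then each $W_{j(a)}$ is a proper closed subset of $X$ stable under $\Phi^p$ and containing $\Phi^a(x)$, so the outer inductive hypothesis shows $\{m:\Phi^{a+pm}(x)\in Y\}$ is good; therefore each $S\cap(a+p\N)$ is good, and hence so is $S=\bigcup_{a<p}\bigl(S\cap(a+p\N)\bigr)$.

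This leaves the case $X$ irreducible, which is the crux. Here one first observes that for every $a\ge 0$ and $d\ge 1$ the sub-orbit closure $\overline{\{\Phi^{a+dm}(x):m\ge 0\}}$ is all of $X$: these $d$ closed sets cover the irreducible space $X$, so one of them equals $X$, and applying $\Phi$ together with $\overline{\Phi(X)}=X$ propagates this to all $d$ of them. Assume $Y\subsetneq X$ (otherwise $S=\N$). Fix $k\ge 1$ and consider the closed set $Y\cap\Phi^{-k}(Y)\subseteq Y$. If it equals $Y$ then $\Phi^k(Y)\subseteq Y$, so either $S=\emptyset$, or the first orbit point entering $Y$ pulls its whole $\Phi^k$-sub-orbit into $Y$, contradicting that this sub-orbit is dense in $X$; so $Y\cap\Phi^{-k}(Y)\subsetneq Y$ and the inner inductive hypothesis applies to it, showing $S\cap(S-k)=\{n:\Phi^n(x)\in Y\cap\Phi^{-k}(Y)\}$ is good. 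But a nontrivial progression inside $S\cap(S-k)$ would confine a dense sub-orbit to the proper closed set $Y$; hence $S\cap(S-k)$ contains no infinite progression, i.e.\ $\delta\bigl(S\cap(S-k)\bigr)=0$, for every $k\ge 1$.

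Finally I would deduce $\delta(S)=0$ from a short combinatorial lemma: if $\delta(S)=c>0$, choose $m$ with $mc>1$; averaging $|S\cap[n,n+m)|$ over a suitable long interval shows $E:=\{n:|S\cap[n,n+m)|\ge 2\}$ has $\delta(E)\ge (mc-1)/m>0$, yet each $n\in E$ lies in one of the finitely many translates $(S\cap(S-k))-j$ with $1\le k<m$ and $0\le j<m$, forcing $\delta(E)\le\sum_{k,j}\delta\bigl(S\cap(S-k)\bigr)=0$, a contradiction. So $S$ is good and the proof is complete. The hard part is exactly this irreducible, dense-orbit situation; the key realizations are that dominance of $\Phi$ makes every sub-orbit dense in $X$, that the right objects to feed to the inductive hypothesis are the closed sets $Y\cap\Phi^{-k}(Y)$, and that the statement ``$S\cap(S-k)$ is Banach-negligible for every shift $k$'' upgrades to ``$S$ is Banach-negligible'' by an elementary count of close pairs.
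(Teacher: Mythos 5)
Your proposal is correct, but its architecture differs from the paper's. The paper first proves, via Lemma~\ref{lem} and a minimal-closed-counterexample argument, the standalone Proposition~\ref{prop: ap} (positive Banach density forces an infinite arithmetic progression in $S$), and then in the proof of Theorem~\ref{main result} it extracts such a progression $\{an+b\}$, stabilizes the descending closures $C_i$ of the sub-orbit $\{g^{an+b}(y)\}$ to get a proper closed $V_0\subseteq W$ with $g^{-a}(V_0)\supseteq V_0$, and applies minimality to the restrictions of $h=g^a$ to the sets $V_j=g^{-j}(V_0)$. You instead run a double Noetherian induction and do the geometric reductions up front: pass to the orbit closure, stabilize the tails, and use the fact that $\Phi$ permutes the irreducible components (with the orbit avoiding the crossing locus) to reduce, via $\Phi^p$ on a single component, to the case where $X$ is irreducible and every arithmetic sub-orbit is dense; there you show the stronger conclusion $\delta(S)=0$ directly, by feeding the proper closed sets $Y\cap\Phi^{-k}(Y)$ to the inner induction (so each $S\cap(S-k)$ is ``good'' yet contains no infinite progression, hence is null) and then upgrading to $\delta(S)=0$ with a close-pairs count that is a qualitative analogue of Lemma~\ref{lem}. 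The shared germ is the same --- Noetherian minimality, the sets $Y\cap\Phi^{-k}(Y)$, and a bounded-gap counting lemma --- but your route never needs Proposition~\ref{prop: ap} as a separate statement and replaces the $C_i$/$V_j$ construction by the component-permutation reduction, yielding the extra structural fact that in the irreducible, totally-dense-orbit situation the return set is outright Banach-null; the paper's route, by isolating Proposition~\ref{prop: ap} and the explicit bounds of Lemma~\ref{lem}, is what feeds its quantitative applications (Theorem~\ref{the case of curves} and Remark~\ref{remark reccursive formula}), which your qualitative lemma would not immediately give. Two terse spots in your write-up are correct but deserve a line each in a final version: that $\overline{\Phi(W_j)}$ equals (not merely lies in) a component, which needs the covering-plus-irreducibility argument, and the propagation of density of one residue-class sub-orbit to all sub-orbits $\{\Phi^{a+dm}(x)\colon m\ge0\}$ via $\Phi(\overline{D})\subseteq\overline{\Phi(D)}$ and $\overline{\Phi(X)}=X$.
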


In particular, Theorem~\ref{main result} implies the following Corollary which provides evidence to an extension of the Dynamical Mordell-Lang Conjecture to the case of rational maps.  
\begin{cor}
\label{rational maps corollary}
Let $X$ be a quasi-projective variety defined over a field $K$, let $\Phi:X\lra X$ be a rational map defined over $K$, let $x\in X(K)$ such that $\cO_\Phi(x)$ is contained in the domain of definition for $\Phi$, and let $Y$ be a $K$-subvariety of $X$. Then the set $S:=\{n\colon \Phi^n(x)\in Y(K) \}$ is a union of at most finitely many arithmetic progressions along with a set of Banach density zero.
\end{cor}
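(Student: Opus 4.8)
The plan is to reduce the statement to a purely combinatorial claim about the itinerary of the orbit among finitely many "states," where the states are irreducible components of closed subsets arising from $Y$ and its preimages. First I would observe that it suffices to treat the case where $x$ is not preperiodic (if $x$ is preperiodic, $S$ is eventually periodic and we are done immediately), so the orbit $\cO_\Phi(x)$ is an infinite set. Next, consider the descending chain obtained by intersecting $Y$ with its iterated preimages: set $Y_0 = Y$ and $Y_{k+1} = Y_k \cap \Phi^{-1}(Y_k)$. Each $Y_k$ is closed (since $\Phi$ is continuous), and $Y_0 \supseteq Y_1 \supseteq \cdots$, so by the Noetherian hypothesis this chain stabilizes: there is some $k_0$ with $Y_{k_0} = Y_{k_0+1} = \cdots =: Z$. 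By construction $Z$ is $\Phi$-invariant in the sense that $Z \subseteq \Phi^{-1}(Z)$, i.e. $\Phi(Z) \subseteq Z$.

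The key dichotomy is then: for $n \ge k_0$, if $\Phi^n(x), \Phi^{n+1}(x), \dots, \Phi^{n+k_0}(x)$ all lie in $Y$, then in fact $\Phi^n(x) \in Z$ (unwinding the definition of $Y_{k_0}$), and hence $\Phi^m(x) \in Z \subseteq Y$ for \emph{all} $m \ge n$; so $S$ contains a full arithmetic progression (ratio $1$) from that point on, and we are done. Thus we may assume this never happens, which means that $S$ contains no $k_0+1$ consecutive integers beyond $k_0$; equivalently, the gaps in $S$ (past $k_0$) are bounded below by... no — rather, every block of $k_0+1$ consecutive integers meets the complement of $S$. I would then want to upgrade "bounded-length runs" to "Banach density zero." This does not follow from bounded runs alone, so the argument needs a finer input: one should instead run the preimage construction not just on $Y$ but inductively to show that the orbit visits a shrinking nested family of closed invariant sets, and that outside a finite union of arithmetic progressions the visits to $Y$ become sparser and sparser. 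The cleanest route is probably a Noetherian induction on $X$ itself: if $Y$ contains a $\Phi$-periodic closed subset $W$ with $\cO_\Phi(x) \cap W \neq \emptyset$, split the orbit along the (finitely many) residues landing in $W$ and recurse on the complementary closed set; the base case is when $Y$ contains no such periodic subvariety meeting the orbit, and there one argues the hitting set has density zero.

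The main obstacle I anticipate is precisely this last point: proving that when $Y$ contains no periodic closed subset meeting the orbit, the set $S$ has Banach density zero. Bounded run-length is easy from the stabilization argument above, but Banach density zero is genuinely stronger, and controlling it seems to require iterating the construction — forming $Y \cap \Phi^{-1}(Y) \cap \cdots \cap \Phi^{-(m-1)}(Y)$ for growing $m$ and showing each such "$m$-fold return set" is a proper closed subset (hence, by Noetherianity applied finitely often, eventually forces periodicity or emptiness). Quantitatively, one wants: for every $m$, the set of $n$ with $\Phi^n(x), \dots, \Phi^{n+m-1}(x)$ all in $Y$ is, outside finitely many arithmetic progressions, \emph{empty}; combined over a suitable sequence $m \to \infty$ this should pin the density at $0$. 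Making the bookkeeping of "finitely many arithmetic progressions at each stage" uniform — so that the total collection of progressions extracted remains finite — is the delicate part, and is where the Noetherian hypothesis must be used in an essential, and not merely finitary-per-step, way.

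**Proof proposal for Corollary~\ref{rational maps corollary}.** This should be immediate from Theorem~\ref{main result}: equip $X(K)$ (or rather the underlying variety $X$) with the Zariski topology, which is Noetherian since $X$ is a quasi-projective variety. The hypothesis that $\cO_\Phi(x)$ lies in the domain of definition of $\Phi$ means the iterates $\Phi^n$ are all defined along the orbit, so restricting to a $\Phi$-invariant open (or to the orbit closure) one obtains a genuine continuous self-map to which Theorem~\ref{main result} applies, with $Y$ the given subvariety (a closed subset in the Zariski topology). The only mild care needed is to pass from the rational map $\Phi$ to an honest continuous map on a Noetherian space containing the orbit — e.g. remove the indeterminacy locus and its preimages, or simply work on $\overline{\cO_\Phi(x)}$ — after which the conclusion transfers verbatim.
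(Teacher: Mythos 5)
Your deduction of Corollary~\ref{rational maps corollary} from Theorem~\ref{main result} is essentially the paper's: the paper packages it as Theorem~\ref{most general result}, passing to $Z=\bigcap_{n\ge 0}\Phi^{-n}(U)$ (where $U$ is the domain of definition) with the induced topology, noting that any subspace of a Noetherian space is Noetherian and that $\Phi$ restricts to a continuous self-map of $Z$; your ``remove the indeterminacy locus and its preimages'' is exactly this. One caution: the alternative ``simply work on $\overline{\cO_\Phi(x)}$'' does not work as stated, since $\Phi$ need not be defined on all of the orbit closure; and $Z$ is in general neither open nor closed in $X$ (it is an intersection of infinitely many opens), which is precisely why the paper states Theorem~\ref{main result} for Noetherian topological spaces rather than only for varieties --- that generality is what makes the reduction legitimate.

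The genuine gap is in your proof of Theorem~\ref{main result} itself, on which the Corollary rests, and you flag it yourself: stabilizing $Y_{k+1}=Y_k\cap\Phi^{-1}(Y_k)$ only rules out long runs of consecutive elements of $S$, and bounded run-length is far weaker than Banach density zero; your proposed ``$m$-fold return set'' iteration has no mechanism to keep the number of extracted progressions finite. The paper closes exactly this gap with two ingredients missing from your sketch. First, a purely combinatorial lemma (Lemma~\ref{lem}): if $\delta(S)>0$, then with $N=[1/\delta(S)]+1$ there exist $k\le N-1$ and $Q\subseteq S$ of positive Banach density with $Q+k\subseteq S$. Second, a Noetherian minimal-counterexample argument (Proposition~\ref{prop: ap}): take $W$ minimal among closed sets whose hitting set $T_W$ has positive density but contains no infinite arithmetic progression; either $\Phi^{-k}(W)\supseteq W$, which forces an infinite progression in $T_W$, or $W\cap\Phi^{-k}(W)$ is a proper closed subset whose hitting set contains $Q$, hence has positive density, so by minimality it contains an infinite progression --- a contradiction either way. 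Thus positive Banach density already implies an infinite arithmetic progression. Theorem~\ref{main result} then follows by a second minimality argument: given the progression $\{an+b\}\subseteq S$, take the stabilized closure $V_0$ of the tail $\{\Phi^{an+b}(x)\colon n\ge m\}$, check $\Phi^{-a}(V_0)\supseteq V_0$ and that the sets $V_j=\Phi^{-j}(V_0)$ are proper closed subsets, apply minimality to $\Phi^a$ acting on each $V_j$, and reassemble $S$ from the residue classes modulo $a$. Without this density-increment/minimality machinery your argument cannot pass from ``no infinite arithmetic progression'' to ``density zero,'' so the proposal as written does not establish the statement.
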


Theorem~\ref{main result} yields that the \emph{Dynamical Mordell-Lang principle} almost holds for all continuous self-maps on Noetherian topological spaces. But obviously in this great generality it cannot always hold; we already know that if $X$ is a $p$-adic analytic manifold and $\Phi$ is an analytic homomorphism, then the set $S$ might be infinite without containing an infinite arithmetic progression (see \cite[Proposition 7.1]{gap-Compo}).   
Theorem~\ref{main result} shows that once removing finitely many arithmetic progressions contained in $S$, we obtain a very sparse set. The key for our proof is the following Proposition which we state in the context of endomorphisms of quasi-projective varieties, but it is true in the more general context of continuous maps on  Noetherian spaces (see Proposition~\ref{prop: ap}).

\begin{prop} Let $X$ be a quasi-projective variety defined over the field $K$, let $x\in X(K)$, and let $\Phi : X\to X$ be an endomorphism defined over $K$.  Assume $Y$ is a Zariski closed subset of $X$ with the property that the set $S:=\{n\colon \Phi^n(x)\in Y\}$ has positive Banach density.  Then $S$ contains an infinite arithmetic progression.  
\label{prop: ap nice}
\end{prop}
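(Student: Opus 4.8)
The plan is to argue by contradiction: suppose $S$ has positive Banach density but contains no infinite arithmetic progression. The first step is to reduce to the purely topological setting of Noetherian spaces, since the only feature of the variety $X$ we use is that its Zariski topology is Noetherian; so I would work with a Noetherian space $X$, a continuous $\Phi\colon X\to X$, a point $x$, and a closed set $Y$, and set $S=\{n\colon \Phi^n(x)\in Y\}$. The key structural idea is to exploit the descending chain condition on closed subsets. Consider the orbit $\cO_\Phi(x)=\{\Phi^n(x)\colon n\ge 0\}$ and its Zariski closure $Z$. Since $\Phi$ is continuous, $\Phi(Z)\subseteq Z$, and one can pass to the closure of a ``tail'' of the orbit: the closures $\overline{\{\Phi^n(x)\colon n\ge m\}}$ form a descending chain of closed sets, which must stabilize to some closed set $Z_\infty$; replacing $x$ by $\Phi^m(x)$ for large $m$ and $Z$ by $Z_\infty$, we may assume the closure of every tail of the orbit equals $Z$.

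Next I would decompose $Z$ into irreducible components $Z_1,\dots,Z_r$ (finitely many, by the Noetherian hypothesis). For each index $i$ let $T_i=\{n\colon \Phi^n(x)\in Z_i\}$; then $\bigcup_i T_i=\N$ (or a cofinite set after the tail reduction). Since $S$ has positive Banach density, there is some $i$ for which $S\cap T_i$ has positive Banach density, and in particular $T_i$ itself is infinite. The crucial claim is then: if $T=\{n\colon \Phi^n(x)\in W\}$ is infinite for an irreducible closed $W$ with $\overline{\cO_\Phi(x)}=Z$, then $T$ contains an infinite arithmetic progression. To prove this, pick $n_0\in T$ and look at $\Phi^{n_0}(x)\in W$; the orbit of this point under $\Phi$ still has closure $Z$ (by the tail reduction), so it is not contained in the proper closed subset $W\cap \Phi^{-1}(W)\cap\cdots$ unless $W$ is itself $\Phi^k$-periodic for a suitable $k$. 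The way to force periodicity: consider the closed sets $W_j=\overline{\{\Phi^{n}(x)\colon n\in T,\ n\ge j\}}$; these are closed, contained in $W$, descend, and stabilize, and the stable value $W_\infty$ satisfies $\Phi^{d}(W_\infty)\subseteq W_\infty$ where $d$ is a gap occurring infinitely often in $T$ — here one uses that an infinite $T\subseteq\N$ without an infinite arithmetic progression still has some difference $d$ realized infinitely often among consecutive-in-$T$ elements, but more carefully, one extracts a $d$ such that $\{n\in T\colon n+d\in T\}$ is infinite, and then shows $\Phi^d$ maps $W_\infty$ into $W_\infty$, so that the $\Phi^d$-orbit of a point of $W_\infty$ gives an arithmetic progression of difference $d$ inside $T$, provided $W_\infty$ is nonempty — which it is, being the stable closure of an infinite orbit piece.

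The main obstacle I anticipate is making the last extraction rigorous: from ``$S\cap T_i$ has positive Banach density'' one must produce a single common difference $d$ and a genuine \emph{infinite} progression $\{a, a+d, a+2d,\dots\}\subseteq S$, not merely arbitrarily long finite ones. Positive Banach density gives long finite progressions in translates, but to upgrade to an infinite progression I expect to iterate the closure-stabilization argument: build a nested sequence of differences and use the Noetherian property to force the construction to terminate, at which point the residual closed set is genuinely $\Phi^d$-invariant and contains a tail of $\{\Phi^{n}(x)\colon n\in S\}$, which yields the infinite progression. Concretely, I would define, for each positive integer $d$ occurring as a difference of infinitely many pairs in $S$, the closed set $V_d=\overline{\{\Phi^n(x)\colon n\in S,\ n+d\in S\}}$, observe $\Phi^d(V_d)\subseteq V_d$, and use that some such $V_d$ must contain infinitely many orbit points $\Phi^n(x)$ with $n, n+d, n+2d,\dots$ all landing in $Y$ — here positive Banach density of $S$ is exactly what guarantees the relevant pair-set is large enough (a density/Szemerédi-flavored pigeonhole in $\N$) that the invariance propagates to a full ray. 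Once the invariant closed set and the common difference are in hand, the arithmetic progression drops out immediately from $\Phi^d$-invariance together with the definition of $Y$ being closed and containing the relevant orbit points.
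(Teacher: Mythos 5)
Your outline contains a genuine gap at the decisive step. The invariance you ``observe'' does not hold: if $V_d=\overline{\{\Phi^n(x)\colon n\in S,\ n+d\in S\}}$ (or your stabilized $W_\infty$ built from a tail of $\{\Phi^n(x)\colon n\in T\}$), then applying $\Phi^d$ to a generator $\Phi^n(x)$ gives $\Phi^{n+d}(x)$, and knowing $n,n+d\in S$ tells you nothing about $n+2d$; so $\Phi^d(V_d)\subseteq V_d$ and $\Phi^d(W_\infty)\subseteq W_\infty$ are unjustified (and false in general). This is precisely the crux of the problem: obtaining a closed set that is genuinely carried into itself by some iterate. Your fallback --- ``iterate the closure-stabilization and let Noetherianity terminate the process'' --- is where all the work lies, and as sketched it cannot be closed, because after one descent you only know the surviving index set is infinite, and an infinite set need not have any difference $d$ realized infinitely often (e.g.\ $\{2^n\}$). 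Relatedly, your ``crucial claim'' (that $T=\{n\colon\Phi^n(x)\in W\}$ infinite for irreducible $W$ forces an infinite arithmetic progression) is false: the paper's positive-characteristic example has hitting set exactly $\{p^n\}$, which is infinite with no infinite progression; irreducibility does not help.

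What makes the recursion work in the paper is a quantitative lemma you never isolate: if $S$ has positive Banach density, there is a \emph{fixed} $k$ (bounded in terms of $\delta(S)$) and a subset $Q\subseteq S$ of \emph{positive Banach density} (not merely infinite) with $Q+k\subseteq S$ (Lemma~\ref{lem}). With this, one runs a minimal-counterexample argument over closed sets: take $W$ minimal among closed sets whose hitting set has positive density but no infinite progression; either $W\subseteq\Phi^{-k}(W)$, in which case the hitting set is closed under adding $k$ and contains an infinite progression, or $W\cap\Phi^{-k}(W)$ is a proper closed subset whose hitting set contains $Q$ and hence still has positive density, contradicting minimality. Your density/pigeonhole remark gestures at the existence of a common difference, but without the positive-density conclusion for $Q$ the descent cannot be repeated, and without the dichotomy ``$W\subseteq\Phi^{-k}(W)$ or descend'' you never obtain the invariant set your final step needs. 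The tail-of-orbit and irreducible-component reductions at the start are harmless but do no real work toward these points.
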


Similar, but weaker, results were previously obtained in \cite{Denis} and \cite{gap-Compo}. Denis \cite{Denis} has treated the question of the distribution of the set $S$ when $Y$ does not contain a
periodic subvariety. He showed, for any morphism of varieties over a field of characteristic $0$, that
$S$ cannot be \emph{very dense of order 2} (see \cite[D\'efinition
2]{Denis}); this is a weaker conclusion than being of Banach density
$0$ (which is the result of our Proposition~\ref{prop: ap nice}). On
the other hand, the result of \cite{gap-Compo} yields a stronger
statement in terms of the sparseness of the set $S$ (assuming this set
does not contain an infinite arithmetic progression); however the
result of \cite{gap-Compo} applies only to
endomorphisms of $(\bP^1)^n$ of the form $(\varphi_1,\dots,\varphi_n)$
where each $\varphi_i$ is a rational map defined over a field of
characteristic $0$.

We note that due to the general setting of our Theorem~\ref{main result} we are able to prove Corollary~\ref{rational maps corollary} for all rational maps on a quasi-projective variety, as opposed to regular maps only. We also note that our results apply  in positive characteristic, in which case one knows that the Dynamical Mordell-Lang Conjecture fails (since also the classical Mordell-Lang principle fails in positive characteristic). For example, if $G$ is a semiabelian variety defined over $\Fp$, $C\subset G$ is a curve of genus greater than $1$ defined over $\Fp$, and $\gamma\in C$ is a (generic) point not defined over $\overline{\F}_p$, then the intersection of $C$ with the cyclic subgroup of $G$ generated by $\gamma$ consists of all points of the form $p^n\gamma$ for nonnegative integers $n$. So, in positive characteristic, Proposition~\ref{prop: ap nice} is (in some sense) best possible because the set $S$ might consist of all powers of the prime number $p$ only. We note that when $K$ has positive characteristic, Ghioca and Scanlon conjectured the precise form of a set $S$ from Proposition~\ref{prop: ap} (in case it does not contain an infinite arithmetic progression). It is expected in that case $S$ consists of finitely many sets of the form 
\begin{equation}
\label{F-sets}
\left\{\sum_{i=1}^k c_i p^{\ell_i n_i}\colon n_i\in\N\right\} 
\end{equation}
for given $k\in\N$, $c_i\in\Q$ and $\ell_i\in\N$. For example, let $C$ be a curve of high genus (at least equal to $3$)  defined over $\Fp$ embedded in its Jacobian $J$, and let $\alpha\in C(\Fp(t))\setminus C(\Fp)$. Then the Zariski closure $V$ of $C+C$ (where the addition takes place inside $J$) \emph{generically} does not contain a translate of a positive dimensional algebraic group. So, the intersection of $V$ with the orbit of $0\in J$ under the translation-by-$\alpha$ map consists of points of the form $(p^m+p^n)\cdot \alpha$ only. In conclusion, if $\Phi$ is an endomorphism of a quasi-projective variety, assuming the set $S$ from Proposition~\ref{prop: ap nice} does not contain an infinite arithmetic progression, if $K$ has characteristic $0$, then it is expected (according to Conjecture~\ref{DML}) that $S$ is finite, while if $K$ has positive characteristic, then it is expected to consist of finitely many subsets of the form \eqref{F-sets}. However, proving both these two precise results seems very difficult at this moment.

Using a similar approach to that of Theorem~\ref{main result} we are
able to prove a result similar to Theorem~\ref{main result} for the
backward orbit of a point in a Noetherian space. More precisely, for a
Noetherian space $X$, a continuous function $f:X\lra X$, and a point
$x\in X$, we define a \emph{coherent backward orbit of $x$} (with
respect to $f$) be a sequence $\{x_{-n}\}_{n\ge 0}$ such that
$$x_0=x\text{ and }f(x_{-n-1})=x_{-n}\text{ for each }n\ge 0.$$
We obtain the following result.
\begin{theorem}
\label{main result DMM}
Let $X$ be a Noetherian space, let $f:X\lra X$ be a continuous function, let $\{x_{-n}\}_{n\ge 0}$ be a coherent backward orbit of a point  $x\in X$, and let $Y\subseteq X$ be a closed set. Then the set $S:=\{n\in\N\colon x_{-n}\in Y\}$ is a union of at most finitely many arithmetic progressions and a set of Banach density zero.
\end{theorem}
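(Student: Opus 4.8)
The plan is to mirror the two‑step proof of Theorem~\ref{main result}: first establish the backward analogue of Proposition~\ref{prop: ap nice} (equivalently of Proposition~\ref{prop: ap}) --- namely that if $S:=\{n\in\N\colon x_{-n}\in Y\}$ has positive Banach density then $S$ already contains an infinite arithmetic progression --- and then run a Noetherian descent to strip off, one at a time, the finitely many progressions, leaving a set of Banach density zero. The second step is formally the same Noetherian induction as in Theorem~\ref{main result}: if the hitting set has density zero we are done, and otherwise the dichotomy produces an infinite progression, after which the remaining residue classes are handled by the inductive hypothesis applied to an iterate of $f$ together with the closures of the corresponding sub‑orbits.

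For the dichotomy, write $y_n:=x_{-n}$, so $f(y_{n+1})=y_n$ for all $n\ge 0$ and $y_0=x$. I would first perform the usual reductions: the closed sets $\overline{\{y_n\colon n\ge m\}}$ form a descending chain, hence stabilize to an $f$‑stable closed set which (after discarding an initial segment of the sequence, harmless for both the Banach density and the shape of $S$) may be taken to be $X$ itself, and one may similarly arrange $\overline{f(X)}=X$. The structural observation proper to the backward setting is then the following. For every integer $b\ge 1$ and every $r$ with $0\le r<b$, the subsequence $\{y_{r+bk}\}_{k\ge 0}$ is a coherent backward orbit of $y_r$ with respect to $f^b$, and $S$ is the disjoint union over $r$ of the progressions $\{r+bk\colon y_{r+bk}\in Y\}$; hence positive Banach density of $S$ forces positive Banach density of one of these sub‑orbit hitting sets. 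Now let $Y^{(b)}\subseteq Y$ denote the largest $f^b$‑stable closed subset of $Y$, i.e.\ the stabilizing value of the descending chain $\bigcap_{0\le j<k}f^{-jb}(Y)$, which exists because $X$ is Noetherian. Since $f^b(Y^{(b)})\subseteq Y^{(b)}$ and $f^b$ \emph{decreases} the index along a backward orbit, each set $\{n\colon y_n\in Y^{(b)}\}$ is downward closed in $\N$, so it is a finite initial segment or all of $\N$; and if merely $y_{r+bk}\in Y^{(b)}$ for one value of $k$, then $y_{r+bk'}\in Y$ for all $k'\ge k$, giving an infinite arithmetic progression inside $S$. Thus it suffices to produce a pair $(b,r)$ for which the iterate $f^b$ traps a full sub‑backward‑orbit inside $Y$.

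To produce such a pair I would argue by contradiction and descend. Suppose $S$ has positive Banach density but contains no infinite progression; then every downward‑closed set $\{k\colon y_{r+bk}\in Y^{(b)}\}$ is finite, and combining this with the stabilization depth of the chain $\bigcap_{j<k}f^{-jb}(Y)$ shows that, within every residue class modulo $b$, $S$ meets only boundedly many long blocks, which also lets one strip off the finitely many ``downward‑closed'' indices. Replacing $Y$ by the Zariski closure of the relevant sub‑orbit points lying in $Y$ (which alters neither the hitting set in question nor $X$), and decomposing into finitely many irreducible components, one is reduced --- as in the heart of Proposition~\ref{prop: ap} --- to the case in which some iterate $f^b$ acts on an irreducible $X$ with every coherent sub‑backward‑orbit Zariski dense in $X$ while $Y\subsetneq X$ is proper and closed. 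Here one must conclude that $S$ has Banach density zero. The governing mechanism is that if a single point of $Y$ were attained by the backward orbit along an infinite index set, that point would be $f$‑periodic, and then (running the period \emph{down} the orbit) the index set of that value would itself contain a full residue class, contradicting the absence of an infinite progression --- so no value is hit infinitely often, and a careful bookkeeping of how a dense backward orbit can meet a proper closed subset then forces density zero.

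The step I expect to be the main obstacle is exactly this last one: the interplay between the \emph{dynamical} descent (passing to iterates $f^b$, which changes the map and splits $\N$ into residue classes) and the \emph{topological} Noetherian descent (shrinking $X$ or $Y$), and in particular making the combined recursion well‑founded despite the fact that passing to an iterate a priori changes neither the state space nor the target. The point where this bites is the irreducible ``base case'' above: one must verify that it is the \emph{only} configuration in which no reduction is available, and then extract Banach density zero directly from the dynamics. This is the backward counterpart of the combinatorial core of Proposition~\ref{prop: ap}, complicated by the fact that forward invariance now propagates toward the anchor point $x=y_0$ rather than away from it, so that infinite progressions in $S$ must be manufactured from trapped sub‑backward‑orbits rather than read off from a single visit to an invariant set.
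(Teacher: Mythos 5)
Your overall two-step architecture (a backward analogue of Proposition~\ref{prop: ap} followed by the same Noetherian induction as in Theorem~\ref{main result}) matches the paper, and your sketch of the second step is essentially the paper's argument. The gap is in the first step: you never actually prove the backward proposition, and the mechanism you propose in its place does not work. First, there is a directional error. Your set $Y^{(b)}=\bigcap_{j\ge 0}f^{-jb}(Y)$ is forward invariant under $f^b$, and along a coherent backward orbit forward iteration \emph{decreases} the index; hence $y_{r+bk}\in Y^{(b)}$ forces $y_{r+bk'}\in Y$ only for $k'\le k$, not for all $k'\ge k$ as you claim. A single visit of the backward orbit to $Y^{(b)}$ therefore traps nothing and produces no infinite arithmetic progression in $S$; to fill a full residue class you need \emph{infinitely many} large indices in a forward-invariant set, and producing those is exactly the content of the proposition you are trying to prove. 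Second, your reduction bottoms out in an ``irreducible base case'' where you assert that ``careful bookkeeping'' of how a dense backward orbit meets a proper closed set forces Banach density zero. That assertion is the entire analytic content of the statement (the observation that no single value is hit infinitely often is far too weak to give density zero), and you yourself flag it as the unproved obstacle, so the argument does not close.

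For comparison, the paper's proof of the backward proposition (Proposition~\ref{prop: ap DMM}) is short and hinges on Lemma~\ref{lem}, which your proposal never invokes: take a minimal closed $W$ whose hitting set $T_W$ has positive density but no infinite progression; Lemma~\ref{lem} yields $k\ge 1$ and a positive-density set $Q\subseteq T_W$ with $Q+k\subseteq T_W$, so $x_{-n-k}\in W\cap f^{-k}(W)$ for $n\in Q$. If $W\subseteq f^{-k}(W)$, then $T_W$ is stable under subtracting $k$, and since it is infinite in some residue class modulo $k$ it contains a whole progression $\{j+\ell k\}_{\ell\ge 0}$ --- note that this uses infinitely many visits, precisely the point your single-visit argument misses. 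Otherwise $W\cap f^{-k}(W)$ is a proper closed subset of $W$ whose hitting set contains $Q+k$, hence has positive density, contradicting minimality. Some version of this density-plus-Noetherian-descent engine (or of the ergodic-theoretic results of Favre/Gignac mentioned in the introduction) is needed; your replacement via the sets $Y^{(b)}$ and an irreducible base case does not supply it.
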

In particular, we ask the following question for algebraic dynamical systems.
\begin{question}
\label{DMM}
Let $X$ be a quasi-projective variety defined over $\C$, let $\Phi:X\lra X$ be an endomorphism, let $\{x_{-n}\}_{n\ge 0}$ be a coherent backward orbit of a point $x\in X(\C)$ (with respect to $\Phi$), and let $Y\subseteq X$ be a subvariety. Is the set $S:=\{n\in\N\colon x_{-n}\in Y(\C)\}$ a union of at most finitely many arithmetic progressions?
\end{question}
Question~\ref{DMM} is related to the Dynamical Manin-Mumford Conjecture (see \cite{ZhangLec, GT-IMRN}). A positive answer to Question~\ref{DMM} yields that if a subvariety $Y$ contains a Zariski dense set of points in common with a coherent backward orbit of a point $x\in X(\C)$, then $Y$ is periodic under the action of $\Phi$. In the original formulation (see \cite{ZhangLec}) of the Dynamical Manin-Mumford Conjecture, it was asked whether a subvariety $Y$ of a projective variety $X$ would have to be preperiodic under the action of a polarizable endomorphism $\Phi$ of $X$ if $Y$ contains a Zariski dense set of preperiodic points (we call a point $y\in X$ preperiodic if its orbit $\cO_\Phi(x)$ is finite). If the point $x$ in Question~\ref{DMM} is preperiodic, then each point in a coherent backward orbit of $x$ is preperiodic, and thus a positive answer to Question~\ref{DMM} provides a positive answer to this special case of the Dynamical Manin-Mumford Conjecture. We note that there are counterexamples coming from endomorphisms of CM abelian varieties  to the original formulation of the Dynamical Manin-Mumford Conjecture (see \cite{GT-IMRN, Fabien}), but we do not know whether those types of counterexamples can be found to Question~\ref{DMM}.

While writing this paper we found out that, using different
techniques, Clayton Petsche proved Theorem~\ref{main result} when
$\Phi$ is an endomorphism of an affine variety $X$. Petsche uses methods from topological dynamics and
ergodic theory; in particular, he uses Berkovich spaces and a strong topological version of the
Poincar\'e Recurrence Theorem.  Also, William
Gignac indicated to us that both Theorem~\ref{main result} and
\ref{main result DMM} follow using arguments stemming from a
deep result of ergodic theory on Noetherian spaces proven by Charles Favre (this is Th\'eor\`eme~2.5.8 in Favre's PhD thesis); see also \cite[Theorem 1.6]{Gignac} for an alternative
proof of Favre's result using measure-theoretic arguments.  We thank
both Clayton Petsche and William Gignac for useful conversations on
this topic. An advantage of our direct approach to proving Theorem~\ref{main result} (and the related results) is that we derive a simple result regarding subsets of $\N$ of positive Banach density (see Lemma~\ref{lem}) which allows us to derive concrete quantative results (see Theorem~\ref{the case of curves} and also Remark~\ref{remark reccursive formula}).

We now briefly sketch the plan of this paper. In Section~\ref{warm-up}, we
prove the key technical Lemma~\ref{lem}. In Section~\ref{proof of our
  main result} we prove Proposition~\ref{prop: ap} (which is
essentially Proposition~\ref{prop: ap nice} for Noetherian spaces) and
then deduce various consequences such as Theorems~\ref{main result}
and \ref{main result DMM}.  We conclude with some remarks on
quantitative results, including Theorem~\ref{the case of curves} in
Section~\ref{quant}.


\section{A technical lemma}
\label{warm-up}

The key result for the proof of Proposition~\ref{prop: ap nice} and its related consequences is the following Lemma.

\begin{lemma}\label{lem}
  Let $S$ be a set of positive integers having positive Banach
  density.  Let $N = [1/\delta(S)] + 1$, where $[ x ]$ as usual
  denotes the greatest integer less than or equal to $x$.  Then there
  is a positive integer $k$ and a subset $Q \subseteq S$ such that
\begin{enumerate}
\item $k \leq N - 1$;
\item $\delta(Q) \ge \frac{ N\delta(S)-1}{2N^2(N-1)}>0$; and
\item for all $a \in Q$, we have $a + k \in S$. 
\end{enumerate}
\end{lemma}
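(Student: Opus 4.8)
The plan is to extract, from the positive Banach density of $S$, a single fixed "gap" $k$ that is realized densely inside $S$, in the sense that a positive-density subset $Q$ of $S$ has $a+k\in S$ for all $a\in Q$. The starting point is the definition of Banach density: since $\delta(S)=\limsup_{|I|\to\infty}|S\cap I|/|I|$, there exist arbitrarily long intervals $I$ with $|S\cap I|/|I|$ as close to $\delta(S)$ as we like; in particular, intervals $I$ with $|S\cap I| > (\delta(S)-\varepsilon)|I|$ for a small $\varepsilon$, say $\varepsilon$ chosen so that $\delta(S)-\varepsilon$ still exceeds $1/N$ where $N=[1/\delta(S)]+1$ (note $N\delta(S)>1$ by the definition of the integer part, so there is room).

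First I would work inside one such long interval $I$ of length $L$, with $A:=S\cap I$ satisfying $|A|>L/N$. The pigeonhole idea: for each of the $N-1$ possible gap values $k\in\{1,\dots,N-1\}$, let $A_k=\{a\in A: a+k\in A\}$. If every $A_k$ were small, then the elements of $A$ would have to be "spread out" — more precisely, partition $I$ into consecutive blocks of length $N-1$; a block can contain at most one element of $A$ unless two elements of $A$ in the same block differ by some $k\le N-1$, i.e. unless some $A_k$ contains an element there. Counting carefully, $|A| \le (\text{number of blocks}) + \sum_{k=1}^{N-1}|A_k| \approx L/(N-1) + \sum_k |A_k|$. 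Since $|A|>L/N$, this forces $\sum_{k=1}^{N-1}|A_k| > L/N - L/(N-1)$, which is positive up to lower-order terms; hence by averaging over $k$, some fixed $k\le N-1$ has $|A_k| \ge \frac{1}{N-1}\bigl(L/N - L/(N-1)\bigr)$, a positive fraction of $L$. The arithmetic needs to be done with enough care to land on the constant $\frac{N\delta(S)-1}{2N^2(N-1)}$ in the statement; the factor $2$ and the exact shape suggest one absorbs the lower-order error terms (coming from the block at the end of $I$ and from the $\varepsilon$ slack) into halving the main term, which is legitimate once $L$ is large.

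The remaining point is to pass from "positive density in arbitrarily long intervals" to "positive Banach density of a single set $Q$." The subtlety is that the good gap $k$ and the good subset $A_k$ produced above a priori depend on the interval $I$. I would handle this by a pigeonhole on $k$ across the (infinitely many) long intervals witnessing the $\limsup$: there are only finitely many candidate values $k\in\{1,\dots,N-1\}$, so one fixed $k$ works for an infinite sequence of intervals $I_j$ with $|I_j|\to\infty$, each contributing a subset $Q_j := (A_k)_{I_j} \subseteq S$ of size $\ge c|I_j|$ with $a+k\in S$ for $a\in Q_j$. Then $Q:=\bigcup_j Q_j \subseteq S$ satisfies $a+k\in S$ for all $a\in Q$, and $\delta(Q)\ge c = \frac{N\delta(S)-1}{2N^2(N-1)}$ because $Q\cap I_j \supseteq Q_j$ has density $\ge c$ in $I_j$ and $|I_j|\to\infty$. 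This also automatically gives (i), $k\le N-1$.

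The main obstacle I anticipate is purely bookkeeping rather than conceptual: getting the counting in the block/pigeonhole argument tight enough to produce the stated explicit constant, and in particular correctly handling the boundary block of length $<N-1$ and the $\varepsilon$-slack in the definition of $\limsup$ so that they are dominated by the leading term $L(N\delta(S)-1)/(N(N-1))$ and can be swallowed by the factor of $2$. One has to be slightly careful that $N\delta(S)-1>0$ strictly (which holds since $N>1/\delta(S)$) so that the bound is genuinely positive, and that the interval is taken long enough that the floor/ceiling effects in "number of blocks $=\lceil L/(N-1)\rceil$" do not erode positivity. Everything else is a straightforward unwinding of definitions.
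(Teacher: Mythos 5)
Your overall strategy (inside long witnessing intervals, pigeonhole to find a gap $k\le N-1$ realized by a positive proportion of $S$, then fix one $k$ that works for infinitely many intervals and take the union of the good subsets) is essentially the paper's, but the central counting step fails as you have written it. You partition $I$ into blocks of length $N-1$ and compare $|A|=|S\cap I|$ with the number of blocks; however the number of blocks is about $L/(N-1)$, and since $N-1=[1/\delta(S)]\le 1/\delta(S)$ one always has $\delta(S)\le 1/(N-1)$, so $|A|$ is (up to the $\varepsilon$-slack) no larger than the number of blocks. Indeed your own displayed conclusion, $\sum_{k=1}^{N-1}|A_k| > L/N - L/(N-1)$, has a \emph{negative} right-hand side; it is not ``positive up to lower-order terms,'' and no factor of $2$ can absorb a main term of the wrong sign. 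As it stands the sketch does not produce any set of positive density, so the heart of the lemma is missing.

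The repair is to take blocks of length $N$ rather than $N-1$, which is exactly what the paper does: since $\delta(S)>1/N$ strictly, choosing intervals with $|S\cap I|\ge \frac{\delta(S)+1/N}{2}|I|$ gives $|A| - \#\text{blocks} \ge \frac{\delta(S)-1/N}{2}L - O(1)>0$; two elements of $A$ in a common block of length $N$ still differ by at most $N-1$, so conclusion (i) is unaffected, and every element of $A$ that is not the largest element of $A$ in its block lies in some $A_k$, so pigeonholing over $k\in\{1,\ldots,N-1\}$ yields a $k$ with $|A_k\cap I|\ge \frac{N\delta(S)-1}{2N(N-1)}L - O(1)$ — in fact a factor of $N$ better than the stated constant (the paper loses that factor by passing through the set $P$ of block indices and the map $i\mapsto a_i$, whereas your direct count of elements avoids it). Your final step — pigeonholing on $k$ across the intervals $I_j$ and setting $Q=\bigcup_j Q_j$, so that $\delta(Q)\ge c$ and $a+k\in S$ for all $a\in Q$ — is correct and plays the role of the paper's appeal to subadditivity of Banach density over the classes $P_j$. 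With the block length corrected, your argument goes through and matches the paper's proof in substance.
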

\begin{proof}
   By assumption, $\frac{1}{N}<\delta(S)$. So there exist intervals $I_n$
  with $|I_n|\to\infty$ such that $$\frac{|S\cap
    I_n|}{|I_n|}>\frac{\delta(S)+\frac{1}{N}}{2}.$$ Let $P=\left\{i\colon
    \left|\{iN+1,\ldots ,(i+1)N\}\cap S\right|\ge 2\right\}$.  We
  claim that $P$ has positive Banach density.  To see this, let
  $J_n=\{i\colon \{iN+1,\ldots ,(i+1)N\}\subseteq I_n\}$.  Then
  $|J_n|\le \frac{|I_n|}{N}$ and $|J_n| \to \infty$ as $n\to \infty$.
  For $i\in J_n\setminus P$ we have $S\cap \{iN+1,\ldots ,(i+1)N\}$
  has size at most $1$ and for $i\in P\cap J_n$ we have $S\cap
  \{iN+1,\ldots ,(i+1)N\}$ has size at most $N$.  Since there are at
  most $2N$ elements of $I_n$ that are not accounted for by taking the
  union of the $\{iN+1,\ldots ,(i+1)N\}$ with $i\in J_n$, we see that
$$\frac{\left(\delta(S)+\frac{1}{N}\right)\cdot |I_n|}{2} \le |I_n\cap S| \le |J_n\setminus P| + N|P\cap J_n| + 2N.$$
Using the fact that $|J_n|\le \frac{|I_n|}{N}$, we see
$$\frac{\left(\delta(S)+\frac{1}{N}\right)\cdot N|J_n|}{2} \le |J_n\setminus P| + N|P\cap J_n| + 2N.$$  Dividing by $N|J_n|$ now gives
$$\frac{\delta(S)+\frac{1}{N}}{2}\le \frac{|J_n\setminus P|}{N|J_n|} + \frac{|P\cap J_n|}{|J_n|} + \frac{2}{|J_n|}.$$  Since $|J_n\setminus P|\le |J_n|$, we get
$$\frac{\delta(S)+\frac{1}{N}}{2}\le \frac{1}{N} + \frac{|P\cap J_n|}{|J_n|} + \frac{2}{|J_n|},$$ which gives
$$\frac{\delta(S)-\frac{1}{N}}{2}\le \frac{|P\cap J_n|}{|J_n|} +
\frac{2}{|J_n|}.$$ Since $|J_n|\to \infty$, we see that $\delta(P) \geq
\frac{\delta(S)-\frac{1}{N}}{2}$.  
 
For each $i\in P$, we pick $a_i,b_i\in \{iN+1,\ldots ,(i+1)N\}\cap S$ with $0<b_i-a_i<N$.
For $j\in \{1,\ldots ,N-1\}$, we let $P_j:=\{i\in P\colon b_i-a_i=j\}$.  Then $P=\cup_{j=1}^{N-1} P_j$ and since Banach density is subadditive, we have
$$\delta(P)\le \sum_{j=1}^{N-1} \delta(P_j).$$  Thus there is some
$k\in \{1,\ldots ,N-1\}$ such that $\delta(P_k)\geq \frac{\delta(P)}{N-1}$.  
Let $Q:=\{a_i\colon i\in P_k\}\subseteq S$.  Then $a + k \in S$ for all $a \in Q$
and a simple computation yields $$\delta(Q) \geq \frac{\delta(P_k)}{N} \ge \frac{N\delta(S)-1}{2N^2(N-1)}>0,$$ as desired.  
\end{proof}

We find useful (see Remark~\ref{remark reccursive formula}) stating the following Corollary of Lemma~\ref{lem}.
\begin{cor}
\label{corollary for N}
Let $S$ be a set of positive integers having positive Banach
  density.   Then there
  is a positive integer $k<\frac{2}{\delta(S)}$ and a subset $Q \subseteq S$ such that
\begin{enumerate}
\item[(a)] $\delta(Q) \ge \frac{ \delta(S)^3}{24}$; and
\item[(b)] for all $a \in Q$, we have $a + k \in S$. 
\end{enumerate}
\end{cor}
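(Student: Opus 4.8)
The plan is to derive Corollary~\ref{corollary for N} from Lemma~\ref{lem} by re-running the proof of that lemma with a better choice of the auxiliary block length $N$. The first thing I would note is that, although Lemma~\ref{lem} is stated only for $N=[1/\delta(S)]+1$, its proof uses nothing about that particular value beyond the two facts that $N\ge 2$ (so that the decomposition $P=\bigcup_{j=1}^{N-1}P_j$ is meaningful) and $1/N<\delta(S)$ (so that intervals $I_n$ with $|S\cap I_n|/|I_n|$ exceeding $\tfrac12(\delta(S)+1/N)$ exist). Hence for \emph{any} integer $N\ge 2$ with $1/N<\delta(S)$ the identical argument produces a positive integer $k\le N-1$ and a subset $Q\subseteq S$ with $a+k\in S$ for all $a\in Q$ and $\delta(Q)\ge \frac{N\delta(S)-1}{2N^2(N-1)}$. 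The reason the lemma's own $N$ will not suffice for the Corollary is that $N\delta(S)-1$ can be made arbitrarily small (this happens when $1/\delta(S)$ lies just below an integer), whereas we want a lower bound of order $\delta(S)^3$.

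Writing $\delta:=\delta(S)$, I would then apply this generalized statement with $N:=\lceil \tfrac{3}{2\delta}\rceil$. Since $0<\delta\le 1$ we have $N\ge \lceil \tfrac32\rceil=2$ and $N\ge \tfrac{3}{2\delta}>\tfrac1\delta$, so $1/N<\delta$ and the argument applies. For the bound on $k$ I would use $\lceil x\rceil-1<x$ to conclude $k\le N-1<\tfrac{3}{2\delta}<\tfrac2\delta$, which gives the asserted inequality for $k$ and settles part (b).

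For part (a), put $u:=N\delta$; from $\tfrac{3}{2\delta}\le N<\tfrac{3}{2\delta}+1$ we get $\tfrac32\le u<\tfrac32+\delta\le\tfrac52$. A routine manipulation turns the desired inequality $\frac{N\delta-1}{2N^2(N-1)}\ge\frac{\delta^3}{24}$ into $12(u-1)\ge u^2(u-\delta)$, and since $u^2(u-\delta)<u^3$ it suffices to check $u^3-12u+12\le 0$ on $[\tfrac32,\tfrac52]$. On that interval the cubic $u\mapsto u^3-12u+12$ is decreasing on $[\tfrac32,2]$ and increasing on $[2,\tfrac52]$, so its maximum is attained at an endpoint and equals $\max\{-\tfrac{21}{8},-\tfrac{19}{8}\}=-\tfrac{19}{8}<0$; thus the inequality holds with room to spare, and (a) follows.

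The argument is essentially routine once $N$ is chosen well, so the only real decision is that choice $N\approx\tfrac{3}{2\delta}$: one wants $N\delta$ bounded away from $1$ (to keep the numerator $N\delta-1$ sizeable), $N<\tfrac2\delta$ (to keep $k$ in the required range), and $N$ not too large (to control the denominator $N^2(N-1)$), and $\tfrac{3}{2\delta}$ is essentially the value maximizing $\frac{N\delta-1}{N^2(N-1)}$, which is exactly why the clean constant $\tfrac1{24}$ comes out. I do not anticipate a genuine obstacle; the only thing requiring care is verifying the cubic inequality and tracking the one-sided rounding bounds on $N$.
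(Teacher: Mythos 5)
Your proof is correct and follows essentially the same route as the paper: both arguments rerun Lemma~\ref{lem} with a block length $N$ of size roughly a constant times $1/\delta(S)$ (you take $N=\lceil 3/(2\delta)\rceil$, the paper takes $N=\lfloor 2/\delta\rfloor$) and then verify the density bound $\frac{N\delta-1}{2N^2(N-1)}\ge\frac{\delta^3}{24}$ by an elementary inequality. Your explicit remark that the lemma's proof only needs $N\ge 2$ and $1/N<\delta(S)$ is exactly the (implicit) justification the paper relies on, and your cubic-endpoint check is sound.
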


\begin{proof}
We let $\delta:=\delta(S)$, and we apply Lemma~\ref{lem} with $N=\left[\frac{2}{\delta}\right]$ (which is at least equal to $\left[\frac{1}{\delta}\right]+1$ since $\delta\le 1$). This shows the existence of a set $Q\subseteq S$ satisfying property (b) above; in addition $\delta(Q)\ge \frac{N\delta -1}{2N^2(N-1)}$. 
So, in order to show that (a) holds, it suffices to prove that $ \frac{N\delta -1}{2N^2(N-1)}\ge \frac{\delta^3}{24}$, which is equivalent with proving that
$$\frac{N\delta-1}{N-1}\ge \frac{N^2\delta^3}{12}=\frac{2}{3N}\cdot \left(\frac{N\delta}{2}\right)^3.$$
Since $N=\left[\frac{2}{\delta}\right]\le \frac{2}{\delta}$, then it suffices to show that $\frac{N\delta-1}{N-1}\ge\frac{2}{3N}$, which is equivalent with showing that 
\begin{equation}
\label{necessary inequality}
\delta \ge \frac{5}{3N}-\frac{2}{3N^2}. 
\end{equation}
Because $N=\left[\frac{2}{\delta}\right]$, then $\frac{2}{\delta}<N+1$ and so, $\delta > \frac{2}{N+1}$. Then inequality \eqref{necessary inequality} follows since
\begin{equation}
\label{necessary inequality 2}
\frac{2}{N+1}-\left(\frac{5}{3N}-\frac{2}{3N^2}\right)=\frac{N-5}{3N(N+1)}+\frac{2}{3N^2} \ge 0.
\end{equation}
Inequality~\ref{necessary inequality 2} is obvious for all $N\ge 5$, while for $N\in\{2,3,4\}$ the inequality can be checked directly (note that $N=\left[\frac{2}{\delta}\right]\ge 2$ because $\delta\le 1$). 
\end{proof}


\section{Proof of our main results}
\label{proof of our main result}

Theorem~\ref{main result} will follow as a consequence of the
following Proposition which is a generalization of
Proposition~\ref{prop: ap nice}.  

\begin{prop}
\label{prop: ap}
Let $X$ be a Noetherian topological space, let $\Phi:X\lra X$ be a continuous map, let $x\in X$, let $Y$ be a closed subset of $X$, and let $S:=\{n\colon \Phi^n(x)\in Y\}$. If $S$ has positive Banach density, then it contains an infinite arithmetic progression.
\end{prop}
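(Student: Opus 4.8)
The plan is to bootstrap from Lemma~\ref{lem} by iterating it together with the Noetherian hypothesis. Suppose $S = \{n : \Phi^n(x) \in Y\}$ has positive Banach density $\delta := \delta(S) > 0$. Applying Lemma~\ref{lem} (or more conveniently Corollary~\ref{corollary for N}) produces a positive integer $k$ and a subset $Q \subseteq S$ with $\delta(Q) > 0$ such that $a + k \in S$ for all $a \in Q$. Now consider the closed set $Y \cap \Phi^{-k}(Y)$: for every $a \in Q$ we have $\Phi^a(x) \in Y$ and $\Phi^{a+k}(x) = \Phi^k(\Phi^a(x)) \in Y$, so $\Phi^a(x) \in Y \cap \Phi^{-k}(Y)$. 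Thus, setting $Y_1 := Y \cap \Phi^{-k}(Y)$, the set $S_1 := \{n : \Phi^n(x) \in Y_1\}$ contains $Q$, hence has positive Banach density, and $Y_1 \subseteq Y$.

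If $Y_1 = Y$, we have gained the crucial fact that $Y \subseteq \Phi^{-k}(Y)$, i.e. $\Phi^k(Y) \subseteq Y$; combined with $\Phi^a(x) \in Y$ for some $a \in Q$, this forces $\Phi^{a + mk}(x) \in Y$ for all $m \ge 0$, which is exactly an infinite arithmetic progression inside $S$, and we are done. If $Y_1 \subsetneq Y$, then $Y_1$ is a \emph{proper} closed subset of $Y$, and we repeat the argument with $Y_1$ in place of $Y$: we obtain $Y_2 := Y_1 \cap \Phi^{-k_2}(Y_1) \subseteq Y_1$ with $\{n : \Phi^n(x) \in Y_2\}$ still of positive Banach density. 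Either the process stabilizes at some stage — giving $Y_j \subseteq \Phi^{-k_{j+1}}(Y_j)$ and hence an infinite arithmetic progression as above — or it produces an infinite strictly descending chain $Y \supsetneq Y_1 \supsetneq Y_2 \supsetneq \cdots$ of closed subsets of $X$, contradicting the Noetherian hypothesis. Hence the process must stabilize, and $S$ contains an infinite arithmetic progression.

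The step I expect to need the most care is verifying that the descending chain is genuinely a chain of \emph{closed} subsets of the ambient Noetherian space $X$: each $Y_j$ is built from intersections of $Y_{j-1}$ with preimages $\Phi^{-k_j}(Y_{j-1})$, and since $\Phi$ is continuous these preimages are closed, so finite intersections remain closed — this is routine but must be stated. A second point needing attention is the bookkeeping when $Y_j = Y_{j-1}$: one must be careful that "$Y_j = Y_{j-1}$'' really does yield $\Phi^{k_j}(Y_{j-1}) \subseteq Y_{j-1}$ and that there is at least one index $a$ with $\Phi^a(x) \in Y_{j-1}$ (which holds because the associated set of positive Banach density is in particular nonempty), so that $\{a + m k_j : m \ge 0\} \subseteq S$. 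Finally, although the quantitative content of Lemma~\ref{lem} is not strictly needed for the qualitative statement, invoking Corollary~\ref{corollary for N} keeps the bound on $k$ explicit, which will be convenient for the quantitative refinements promised later in the paper.
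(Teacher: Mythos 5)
Your argument is correct and is essentially the paper's own proof: the same use of Lemma~\ref{lem} to produce $k$ and $Q$, the same dichotomy on whether $Y\cap\Phi^{-k}(Y)$ equals $Y$ (giving $\Phi^k(Y)\subseteq Y$ and hence an infinite progression) or is a proper closed subset still met with positive Banach density. The only difference is bookkeeping: you run an explicit descending chain and invoke the descending chain condition, whereas the paper picks a minimal closed set $W$ with no infinite progression and derives a contradiction — two equivalent ways of using the Noetherian hypothesis.
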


\begin{proof} 
Consider the set $\mathcal{V}$ of all closed subsets $V$ of $X$ with
the property that $T_V:=\{n\colon \Phi^n(x)\in V\}$ has positive
Banach density but does not contain an infinite arithmetic
progression.  If $\mathcal{V}$ is empty, then there is nothing to
prove.  Thus we may assume, towards a contradiction, that
$\mathcal{V}$ is non-empty.  We let $W$ be a minimal element of
$\mathcal{V}$ with respect to the inclusion of sets (note that such an
element exists since $X$ is Noetherian).  By Lemma \ref{lem}, we have
a positive integer $k$ and a  
subset $Q \subseteq T_W$ with $\delta(Q) > 0$ such that $a + k \in
T_W$ for all $a \in Q$.  

If $n\in Q$, then $\Phi^n(x)\in W$ and $\Phi^{n+k}(x)\in W$.  Thus $\Phi^n(x)\in W\cap \Phi^{-k}(W)$ whenever $n\in Q$.  If $\Phi^{-k}(W)\supseteq W$ then $T_W$ has the property that $n+k\in T_W$ whenever $n\in T_W$ and since $T_W$ is non-empty, it contains an infinite arithmetic progression, which contradicts the fact that $W\in \mathcal{V}$.   Thus $Z:=W\cap \Phi^{-k}(W)$ is a proper closed subset of $W$ (since $\Phi$ is continuous and $W$ is closed) and so we have $\Phi^n(x)\in Z$ for all $n\in Q$.  Since $Q$ has positive Banach density, we obtain that $T_Z\supseteq Q$ also has positive Banach density and therefore $T_Z$ contains an infinite arithmetic progression.  Since $T_Z\subseteq T_W$, we see that $T_W$ contains an infinite arithmetic progression, a contradiction. This concludes our proof.
\end{proof}

Theorem~\ref{main result} follows easily now.

\begin{proof}[Proof of Theorem \ref{main result}.] Suppose not.  Let $\mathcal{V}$ be the collection of all closed subsets $V$ of $X$ that have the property that there is some continuous map $g:V\to V$, and some closed subset $W$ of $V$, and a point $y\in V$ such that  $\{n\colon g^{n}(y)\in W\}$ cannot be expressed as a finite union of arithmetic progressions along with a set of Banach density zero.

By assumption, $X\in \mathcal{V}$ and so we may choose a minimal element $V\in \mathcal{V}$.  Then there is some continuous map $g:V\lra V$, some closed subset $W$ of $V$, and some point $y\in V$ such that  $T:=\{n\colon g^{n}(y)\in W\}$ cannot be expressed as a finite union of arithmetic progressions along with a set of Banach density zero.  We necessarily have that $W_i:=g^{-i}(W)$ is a proper closed subset of $V$ (note that $g$ is continuous), since otherwise $T$ would contain every integer greater than or equal to $i$ (and thus it would be the union of an arithmetic progression with a finite set).  Moreover, by our choice of $V$, $W$ and $y$, it follows that $\delta(T)>0$ and thus by
Proposition \ref{prop: ap}, there exist $a,b\in \mathbb{N}$ such that $T\supseteq \{an+b\colon n\ge 0\}$.  Let $C_i$ denote the closure of $S_i:=\{g^{(an+b)}(y)\colon n\ge i\}$.  Then $$C_0\supseteq C_1\supseteq \cdots $$ is a descending chain of closed sets and hence there is some $m$ such that $C_m=C_{m+1}=\cdots$.   We take $V_0=C_m$.  Then $g^{-a}(V_0)\supseteq g^{-a}(S_{m+1})\supseteq S_m$ and since $g^{-a}(V_0)$ is closed we thus see it contains the closure of $S_m$, which is $V_0$.

Then $V_0\subseteq W$ is closed and we have $g^{-a}(V_0)\supseteq V_0$.  
We let $V_j$ denote the closed set $g^{-j}(V_0)$ for $j\in \{1,\ldots ,a-1\}$.  Since $V_j\subseteq W_{a+j}\subsetneq V$, we see that each $V_j$ is a proper  subset for $0\le j\le a-1$.  Then $g^{-a}(V_j)=g^{-a}(g^{-j}(V_0))=g^{-j}(g^{-a}(V_0)) \supseteq g^{-j}(V_0)=V_j$, and so for $j\in \{0,\ldots ,a-1\}$, we have $g^{-j+na+b}(y)\in V_{j}$ for every $n> m$.  Moreover, since $g^{-a}(V_j)\supseteq V_j$, we have that $h:=g^a$ restricts to continuous maps $h:V_j\lra V_j$ for each $0\le j\le a-1$.  We let $y_j:=g^{-j+a+b}(y)$. It follows from the minimality of $V$ that 
$$T_j:=\{n\ge m\colon h^{n}(y_j)\in W\cap V_j\}$$ is a finite union of arithmetic progressions along with a set of Banach density zero. On the other hand, 
$$T_j=\{n\ge m\colon g^{-j+a(n+1)+b}(y)\in W\},$$
for each $j=0,\cdots,a-1$. 
Then, up to a finite set, we have $$T= \bigcup_{j=0}^{a-1} (aT_j+b+a-j),$$ where for any set $U\subseteq \N$ and any $c\in\N$, we let $c\cdot U$ be the set $\{cj\colon j\in U\}$, and we let $c+U:=U+c$ be the set $\{c+j\colon j\in U\}$.  Hence $T$ is a finite union of arithmetic progressions along with a set of Banach density zero.
\end{proof}

As a consequence to our main result, we can prove the following (seemingly) stronger statement.
\begin{theorem}
\label{most general result}
Let $X$ be a Noetherian space, let $U\subseteq X$ be an open subset, and let $\Phi:U\lra X$ be a continuous map. Let $x\in X$ such that $\Phi^n(x)\in U$ for each nonnegative integer $n$. Then for each closed set $Y\subseteq X$, the set $S:=\{n\in\N\colon \Phi^n(x)\in Y\}$ is a union of at most finitely many arithmetic progressions along with a set of Banach density zero
\end{theorem}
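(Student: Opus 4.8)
The plan is to deduce Theorem~\ref{most general result} from Theorem~\ref{main result} by restricting everything to the $\Phi$-invariant subspace consisting of those points whose entire forward orbit is defined and stays in $U$.

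First I would introduce
$$U_\infty := \{u\in X\colon \Phi^n(u)\text{ is defined and }\Phi^n(u)\in U\text{ for all }n\ge 0\},$$
where $\Phi^0=\mathrm{id}$, so in particular $U_\infty\subseteq U$. Since $\Phi^{n+1}(u)=\Phi(\Phi^n(u))$ is defined exactly when $\Phi^n(u)\in U$, it is immediate that $\Phi(U_\infty)\subseteq U_\infty$: if $u\in U_\infty$ then $\Phi(u)\in U$ and $\Phi^n(\Phi(u))=\Phi^{n+1}(u)\in U$ for every $n\ge 0$. Next I would record two elementary facts. (i) Endowed with the subspace topology, $U_\infty$ is again Noetherian; in fact every subspace $A$ of a Noetherian space is Noetherian, since a strictly descending chain $Z_1\supsetneq Z_2\supsetneq\cdots$ of closed subsets of $A$ gives a strictly descending chain of closures $\overline{Z_1}\supsetneq\overline{Z_2}\supsetneq\cdots$ in $X$ (if $\overline{Z_i}=\overline{Z_{i+1}}$ then $Z_i=\overline{Z_i}\cap A=\overline{Z_{i+1}}\cap A=Z_{i+1}$). (ii) The restriction $\Phi|_{U_\infty}\colon U_\infty\to U_\infty$ is continuous for the subspace topology, being the continuous map $\Phi\colon U\to X$ with its domain and codomain both restricted.

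Then I would note that the hypothesis ``$\Phi^n(x)\in U$ for every $n\ge 0$'' says precisely that the forward orbit of $x$ lies in $U$; consequently the forward orbit of $\Phi^m(x)$ lies in $U$ for every $m\ge 0$, so $\Phi^m(x)\in U_\infty$ for all $m$. In particular $x\in U_\infty$, and since $U_\infty$ is $\Phi$-invariant the iterates of $x$ under $\Phi|_{U_\infty}$ agree with the $\Phi^n(x)$. Now I would apply Theorem~\ref{main result} to the Noetherian space $U_\infty$, the continuous self-map $\Phi|_{U_\infty}$, the point $x\in U_\infty$, and the closed subset $Y\cap U_\infty$ of $U_\infty$, obtaining that $\{n\in\N\colon \Phi^n(x)\in Y\cap U_\infty\}$ is a union of at most finitely many arithmetic progressions together with a set of Banach density zero. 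Since $\Phi^n(x)\in U_\infty$ for every $n$, this set coincides with $S=\{n\in\N\colon \Phi^n(x)\in Y\}$, which finishes the proof.

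There is no serious obstacle here — this is essentially a bookkeeping reduction — and the only points demanding (mild) attention are the verification that $U_\infty$ is $\Phi$-invariant, the observation that subspaces of Noetherian spaces stay Noetherian, and the bookkeeping that the iterates on $U_\infty$ match those on $X$ while replacing $Y$ by $Y\cap U_\infty$ leaves $S$ unchanged. (I note in passing that openness of $U$ is not actually used by this argument; the same proof works whenever $U$ is any subspace of $X$ on which $\Phi$ is defined and continuous. Alternatively one could re-run the minimality argument from the proof of Theorem~\ref{main result} directly in this slightly more general setting, but the reduction above is cleaner.)
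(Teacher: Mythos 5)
Your proof is correct and follows essentially the same route as the paper's: the paper likewise restricts to the $\Phi$-invariant Noetherian subspace $Z=\bigcap_{n\ge 0}\Phi^{-n}(U)$ (your $U_\infty$) with the inherited topology and then applies Theorem~\ref{main result} there. The only cosmetic difference is that the paper verifies continuity of the restricted map by an explicit preimage computation that invokes the openness of $U$, whereas you use the general restriction/corestriction fact, and your parenthetical remark that openness of $U$ is not actually needed for this reduction is accurate.
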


\begin{proof}
Let $Z:=\cap_{n\ge 0}\Phi^{-n}(U)$; we know that $Z$ is nonempty since $x$ (and therefore $\cO_\Phi(x)$) is contained in $Z$. We endow $Z$ with the inherited topology from $X$; then $Z$ is also a Noetherian space. Furthermore, by its definition, we obtain that $\Phi$ restricts to a self-map  $\Phi_Z:Z\lra Z$. Next we show that $\Phi_Z$ is continuous. 

Indeed, let $V\subseteq X$ be an open set and we need to show that $\Phi_Z^{-1}(V\cap Z)$ is open in $Z$. This follows immediately once we prove that $\Phi_Z^{-1}(V\cap Z)=\Phi^{-1}(V)\cap Z$ because $\Phi:U\lra X$ is continuous and so $\Phi^{-1}(V)$ is open in $U$ and (because $U$ is an open subset of $X$) it is also open in $X$ which yields that $\Phi^{-1}(V)\cap Z$ is open in $Z$. To see that $\Phi_Z^{-1}(V\cap Z)=\Phi^{-1}(V)\cap Z$ we note that for each $y\in \Phi_Z^{-1}(V\cap Z)\subseteq Z$ we have $\Phi_Z(y)\in V$. So, $\Phi(y)\in V$ and thus $y\in \Phi^{-1}(V)\cap Z$. Conversely, if $y\in \Phi^{-1}(V)\cap Z$, then $y\in Z$ and so $\Phi_Z(y)\in V\cap Z$ as claimed. 

Therefore $\Phi_Z:Z\lra Z$ is a continuous map on a Noetherian space. Hence by Theorem~\ref{main result}, the set of all $n\in \N$ such that $\Phi_Z^n(x)\in Y\cap Z$ is a union of at most finitely many arithmetic progressions along with a set of Banach density zero. Because $\Phi=\Phi_Z$ on $Z$ then $\Phi^n(x)\in Y$ if and only if $\Phi_Z^n(x)\in Y\cap Z$, which concludes our proof.
\end{proof}

Corollary~\ref{rational maps corollary} is an immediate consequence of Theorem~\ref{most general result}. The proof for Theorem~\ref{main result DMM} is similar to the proof of Theorem~\ref{main result} and it relies on the following result.
\begin{prop}
\label{prop: ap DMM}
Let $X$ be a Noetherian space, let $f:X\lra X$ be a continuous function, let $\{x_{-n}\}_{n\ge 0}$ be a coherent backward orbit of a point  $x\in X$, and let $Y\subseteq X$ be a closed set. If the set $S:=\{n\in\N\colon x_{-n}\in Y\}$ has positive Banach density then it contains an infinite arithmetic progression.
\end{prop}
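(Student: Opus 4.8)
The plan is to mimic the proof of Proposition~\ref{prop: ap} almost verbatim, replacing the forward orbit by the coherent backward orbit and the preimage operation $\Phi^{-k}$ by the image operation $f^k$. Concretely, I would consider the collection $\mathcal{V}$ of all closed subsets $V$ of $X$ such that $T_V:=\{n\in\N\colon x_{-n}\in V\}$ has positive Banach density but contains no infinite arithmetic progression, and argue by contradiction that $\mathcal{V}$ is empty. If not, use the Noetherian hypothesis to pick a minimal element $W\in\mathcal{V}$, and apply Lemma~\ref{lem} to $T_W$ to obtain a positive integer $k$ and a subset $Q\subseteq T_W$ of positive Banach density with $a+k\in T_W$ for all $a\in Q$.

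Now the key point: for $n\in Q$ we have $x_{-n}\in W$ and $x_{-(n+k)}\in W$. Since $\{x_{-m}\}$ is a \emph{coherent} backward orbit, $f^k(x_{-(n+k)})=x_{-n}$, so $x_{-n}\in f^k(W)$ as well; hence $x_{-n}\in W\cap f^k(W)$ for all $n\in Q$. Unlike the forward case, $f^k(W)$ need not be closed, so I would instead set $Z:=W\cap \overline{f^k(W)}$ (the closure), which is a closed subset of $W$ containing $x_{-n}$ for all $n\in Q$, so $T_Z\supseteq Q$ has positive Banach density. If $Z=W$, i.e. $W\subseteq \overline{f^k(W)}$, then I need to derive the existence of an infinite arithmetic progression in $T_W$ directly, contradicting $W\in\mathcal V$; if $Z\subsetneq W$, then by minimality of $W$ either $T_Z$ has Banach density zero (impossible, since $T_Z\supseteq Q$) or $T_Z$ contains an infinite arithmetic progression, and since $T_Z\subseteq T_W$ this contradicts $W\in\mathcal V$.

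The main obstacle is precisely the case $W\subseteq\overline{f^k(W)}$. In the forward proof the analogous step was immediate: $W\subseteq\Phi^{-k}(W)$ forces $n\mapsto n+k$ to preserve $T_W$, giving an arithmetic progression for free. For backward orbits one wants the opposite implication, so the clean move is to descend further: apply the whole argument again inside $W$ with the restricted map. More carefully, I would iterate the minimal-counterexample machinery as in the proof of Theorem~\ref{main result} — take the descending chain of closures of tails $\overline{\{x_{-(an+b)}\colon n\ge i\}}$, stabilize it by Noetherianity to get a closed set $V_0$ with $f^a(V_0)\subseteq \overline{f^a(V_0)}=V_0$ along the relevant subsequence, and then the restriction of $f^a$ to $V_0$ together with the shifted backward orbit gives a strictly smaller instance, contradicting minimality of $W$. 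The bookkeeping with the residue classes $j=0,\dots,a-1$ and the index shifts is routine but must be done exactly as in Theorem~\ref{main result}; the conceptual content is that coherence of the backward orbit is what makes $f^k$ play the role that $\Phi^{-k}$ played before.

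In fact, once Proposition~\ref{prop: ap DMM} is in hand, Theorem~\ref{main result DMM} follows from it by the identical Noetherian-induction argument used to deduce Theorem~\ref{main result} from Proposition~\ref{prop: ap}, with forward images replaced by the coherent backward orbit throughout, so I would only write out the proof of the Proposition in detail and then remark that the deduction of the Theorem is verbatim.
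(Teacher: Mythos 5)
Your setup (minimal counterexample $W$, Lemma~\ref{lem} producing $k$ and $Q$, and the resolution of the case $Z\subsetneq W$ by minimality) is fine, but the proof breaks exactly where you flag "the main obstacle," and your proposed fix does not close it. By choosing to track the \emph{earlier} point $x_{-n}\in W\cap f^k(W)$ you are forced to pass to $Z:=W\cap \overline{f^k(W)}$, and then the degenerate case $W\subseteq\overline{f^k(W)}$ carries no usable dynamical information: closure can add many points, and $W\subseteq\overline{f^k(W)}$ implies neither $f^k(W)\subseteq W$ nor $W\subseteq f^{-k}(W)$, so nothing about $T_W$ follows. Your suggested repair --- running the tail-closure/stabilization machinery from the proof of Theorem~\ref{main result} inside $W$ --- is circular: that machinery takes as input an infinite arithmetic progression $\{an+b\}\subseteq T$ furnished by the Proposition, which is precisely what you are trying to prove here; moreover the stabilized set $V_0$ it would produce need not be a proper closed subset of $W$, so no contradiction with minimality is actually obtained.

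The paper's proof avoids the obstacle by tracking the \emph{later} point instead: since the backward orbit is coherent, $f^k(x_{-(n+k)})=x_{-n}\in W$ for $n\in Q$, so $x_{-(n+k)}\in W\cap f^{-k}(W)$, and $f^{-k}(W)$ is already closed (continuity), so no closures are needed and $T_Z\supseteq Q+k$ still has positive Banach density. The dichotomy then becomes $W\subseteq f^{-k}(W)$ or not. In the second case one argues by minimality exactly as you do; in the first case $f^k(W)\subseteq W$, hence $T_W$ is closed under $n\mapsto n-k$ (for $n\ge k$), and since $T_W$ is infinite (positive density) some residue class $j$ modulo $k$ contains arbitrarily large elements of $T_W$, whence the full progression $\{j+\ell k\colon \ell\ge 0\}$ lies in $T_W$ --- the desired contradiction. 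This "downward closure plus infinitude" step is the piece your argument is missing, and it is only available because the good case is phrased as $f^k(W)\subseteq W$ rather than $W\subseteq\overline{f^k(W)}$. (Your closing remark on deducing Theorem~\ref{main result DMM} is essentially right in spirit, though the paper's deduction also needs a separate small argument for the case $g^{-i}(W)=V$, where downward closure again replaces the forward-orbit reasoning.)
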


\begin{proof}
The proof is similar to the proof of Proposition~\ref{prop: ap
  DMM}. Consider the set $\mathcal{V}$ of all closed subsets $V$ of
$X$ with the property that $T_V:=\{n\colon x_{-n}\in V\}$ has positive
Banach density but does not contain an infinite arithmetic
progression.  If $\mathcal{V}$ is empty, then there is nothing to
prove.  Thus we may assume, towards a contradiction, that
$\mathcal{V}$ is non-empty.  We let $W$ be a minimal element of
$\mathcal{V}$ with respect to the inclusion of sets (note that such an
element exists since $X$ is Noetherian). By Lemma~\ref{lem},  we see
that there exists a positive integer $k$ and a set $Q\subseteq T_W$ of positive Banach density such that if $n\in Q$ then $n+k\in T_W$. Thus $x_{-n-k}\in W\cap f^{-k}(W)$ whenever $n\in Q$. There are two cases: either $f^{-k}(W)\supseteq W$ or not.

Assume now that $f^{-k}(W)\supseteq W$; so if $y\in W$, then also $f^k(y)\in W$. Then $T_W$ has the property that $n-k\in T_W$ whenever $n\in T_W$ and $n\ge k$. Because $Q$ has positive Banach density, then it is infinite, and therefore there exists $j\in \{0,\dots, k-1\}$ such that there exist infinitely many $n\in Q$ satisfying $n\equiv j\pmod{ k}$. So there exists a sequence of integers $n_i\to \infty$ contained in $T_W$ such that $n_i\equiv j\pmod{k}$ for each $i$ and moreover, $n_i-\ell k\in T_W$ for all $\ell\ge 0$ such that $n_i-\ell k\ge 0$. Thus $T_W$ contains the infinite arithmetic progression $\{j+\ell k\}_{\ell\ge 0}$, as desired.

Assume now that $W\not\subseteq f^{-k}(W)$.   Then $Z:=W\cap f^{-k}(W)$ is a proper closed subset of $W$ and so we have $x_{-n-k}\in Z$ for all $n\in Q$.  Since $Q$ has positive Banach density, we obtain that also $T_Z$ has positive Banach density (note that $\delta(Q+k)=\delta(Q)>0$ and $(Q+k)\subseteq T_Z$). Hence $T_Z$ contains an infinite arithmetic progression (because $Z$ is a proper subset of $W$).  Since $T_Z\subseteq T_W$, we see that $T_W$ contains an infinite arithmetic progression, a contradiction. This concludes our proof.
\end{proof}

Theorem~\ref{main result DMM} follows from Proposition~\ref{prop: ap DMM} similar to the proof of Theorem~\ref{main result}.

\begin{proof}[Proof of Theorem~\ref{main result DMM}.]
Let $\mathcal{V}$ be the collection of all closed subsets $V$ of $X$ that have the property that there is some continuous map $g:V\to V$, and some closed subset $W$ of $V$, and a coherent backward orbit $\{y_{-n}\}$ of a point $y\in V$ such that  $\{n\colon y_{-n}\in W\}$ cannot be expressed as a finite union of arithmetic progressions along with a set of Banach density zero.

Assume, for contradiction, that $X\in \mathcal{V}$ and so we may choose a minimal element $V\in \mathcal{V}$.  Then there is some continuous map $g:V\lra V$, some closed subset $W$ of $V$, and a coherent backward orbit $\{y_{-n}\}_{n\ge 0}$ of a point $y\in V$ such that  $T:=\{n\colon y_{-n}\in W\}$ cannot be expressed as a finite union of arithmetic progressions along with a set of Banach density zero.  

For each $i\in\N$ we let $W_i:=g^{-i}(W)$. We claim that if $W_i=V$ for some $i\in\N$, then $T$ is a union of at most finitely many arithmetic progressions along with a finite set (which obviously has Banach density zero). Indeed, for each $j\in\{0,\dots, i-1\}$ we let $T_{i,j}:=\{n\in T\colon n\equiv j\pmod{i}\}$. Then $T=\cup_{j=0}^{i-1}T_{i,j}$. Assume $T_{i,j}$ is infinite (for some $0\le j\le i-1$). Since $W_i=V$ then for each $n\in T$ also $n-i\in T$ and therefore $n-i\ell\in T$ for all $\ell\ge 0$ such that $n-i\ell\ge 0$. Because we assumed that $T_{i,j}$ is infinite, then $j+i\ell\in T$ for all $\ell\ge 0$. In conclusion, $T$ is indeed a union of at most finitely many arithmetic progressions (of ratio $i$) along with a finite set.

So from now on assume that $W_i:=g^{-i}(W)$ is a proper closed subset of $V$ (note that $g$ is continuous).  Moreover, by our choice of $V$, $W$ and $y$, it follows that $\delta(T)>0$ and thus by
Proposition \ref{prop: ap DMM}, there exist $a,b\in \mathbb{N}$ such that $T\supseteq \{an+b\colon n\ge 0\}$.  Let $C_i$ denote the closure of $S_i:=\{y_{-(an+b)}\colon n\ge i\}$.  Then $$C_0\supseteq C_1\supseteq \cdots $$ is a descending chain of closed sets and hence there is some $m$ such that $C_m=C_{m+1}=\cdots$.   We take $V_0=C_m$.  Then $g^{-a}(V_0)\supseteq g^{-a}(S_{m})\supseteq S_{m+1}$ and since $g^{-a}(V_0)$ is closed we thus see it contains the closure of $S_{m+1}$, which is $V_0$.

Then $V_0\subseteq W$ is closed and we have $g^{-a}(V_0)\supseteq V_0$.  
We let $V_j$ denote the closed set $g^{-j}(V_0)$ for $j\in \{1,\ldots ,a-1\}$.  Since $V_j\subseteq W_{a+j}\subsetneq V$, we see that each $V_j$ is a proper  subset for $0\le j\le a-1$.  Then $g^{-a}(V_j)=g^{-a}(g^{-j}(V_0))=g^{-j}(g^{-a}(V_0)) \supseteq g^{-j}(V_0)=V_j$, and so for $j\in \{0,\ldots ,a-1\}$, we have $y_{-(na+b+j)}\in V_{j}$ for every $n\ge m$.  Moreover, since $g^{-a}(V_j)\supseteq V_j$, we have that $h:=g^a$ restricts to continuous maps $h:V_j\lra V_j$ for each $0\le j\le a-1$.  It follows from the minimality of $V$ that 
$$T_j:=\{n\ge m\colon (y_{-(na+b+j)})\in W\cap V_j\}$$ is a finite union of arithmetic progressions along with a set of Banach density zero (since $\{y_{-j-na-b}\}_{n\ge 0}$ is a coherent backward orbit of $y_{-j-b}$ under the action of $h$). 
Then, up to a finite set, we have $$T= \bigcup_{j=0}^{a-1} (aT_j+b+j)$$ and hence $T$ is a finite union of arithmetic progressions along with a set of Banach density zero.
\end{proof}

\section{Some quantitative results}\label{quant}

The following result is an easy application of Lemma~\ref{lem}.

\begin{theorem}
\label{the case of curves}
Let $X$ be a quasi-projective variety defined over a field $K$, let $\Phi:X\lra X$ be an endomorphism defined over $K$,  let $C\subseteq X$ be an irreducible curve, and let $\alpha \in X(K)$ be a point that is not
preperiodic under $\Phi$.    
If the set $S:=\{n\in\N\colon \Phi^n(\alpha) \in C(K)\}$ has Banach
density $\delta>0$, then $S$ contains an infinite arithmetic
progression of ratio $k = \frac{1}{\delta}$, and $\Phi^k(C)\subseteq C$.
\end{theorem}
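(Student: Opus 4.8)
The plan is to feed the hypothesis directly into the technical Lemma~\ref{lem} and then use that $C$ is an irreducible curve to upgrade the conclusion. First I would set $N := [1/\delta] + 1$ and apply Lemma~\ref{lem} to $S$; this produces a positive integer $k$ with $1 \le k \le N-1 = [1/\delta]$ and a subset $Q \subseteq S$ with $\delta(Q) > 0$ such that $a + k \in S$ for every $a \in Q$. In particular $Q$ is infinite, since a set of positive Banach density cannot be finite.

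Next I would read the two membership conditions geometrically. For each $n \in Q$ we have $\Phi^n(\alpha) \in C$ and $\Phi^{n+k}(\alpha) \in C$, so $\Phi^n(\alpha) \in C \cap \Phi^{-k}(C)$. As $\Phi$ is a morphism, $\Phi^{-k}(C)$ is Zariski closed, hence $C \cap \Phi^{-k}(C)$ is a Zariski closed subset of the irreducible one-dimensional variety $C$, and therefore equals either $C$ itself or a finite set of points. If the intersection were finite, then $\{\Phi^n(\alpha) : n \in Q\}$ would be finite while $Q$ is infinite, so $\Phi^{n_1}(\alpha) = \Phi^{n_2}(\alpha)$ for some $n_1 < n_2$; this makes $\alpha$ preperiodic, contradicting the hypothesis. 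Hence $C \cap \Phi^{-k}(C) = C$, that is, $\Phi^k(C) \subseteq C$.

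Finally I would extract the arithmetic progression and compute $k$. Choosing any $a \in Q$ and iterating $\Phi^k(C) \subseteq C$, we get $\Phi^{a + \ell k}(\alpha) = (\Phi^k)^\ell(\Phi^a(\alpha)) \in C$ for all $\ell \ge 0$, so $S$ contains the infinite arithmetic progression $\{a + \ell k : \ell \ge 0\}$ of ratio $k$. This progression has Banach density $1/k$, so $\delta \ge 1/k$; on the other hand $k \le [1/\delta] \le 1/\delta$ gives $1/k \ge \delta$. Combining the two inequalities yields $\delta = 1/k$, i.e. $k = \frac{1}{\delta}$, exactly the asserted value.

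I do not anticipate a genuine difficulty: the argument is essentially a short consequence of Lemma~\ref{lem} once the curve is brought in. The only steps demanding (minor) care are the dichotomy ``$C \cap \Phi^{-k}(C)$ is $C$ or finite'' — which uses that a proper closed subset of an irreducible curve is zero-dimensional, hence finite — and the standard observation that a repeated value in the forward orbit of $\alpha$ forces $\alpha$ to be preperiodic.
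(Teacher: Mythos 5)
Your proposal is correct and takes essentially the same route as the paper's proof: apply Lemma~\ref{lem} to get $k\le[1/\delta]$ and an infinite $Q$, observe $\Phi^n(\alpha)\in C\cap\Phi^{-k}(C)$ for $n\in Q$, use irreducibility of $C$ (and non-preperiodicity of $\alpha$, which you spell out and the paper leaves implicit) to conclude $\Phi^k(C)\subseteq C$, and then pin down $k=1/\delta$ by the same two-sided density comparison.
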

\begin{proof}
  It follows from Lemma~\ref{lem} applied with
  $N=\left[\frac{1}{\delta}\right]+1$ that there exists a positive
  integer $k\le \left[\frac{1}{\delta}\right]$ and a subset $Q\subset
  S$ of positive density such that for each $n\in Q$, also
  $\Phi^{n+k}(\alpha)\in C(K)$. So $\Phi^n(\alpha)\in C\cap
  \Phi^{-k}(C)$.  Hence, $C\cap\Phi^{-k}(C)$ contains an infinite set
  of points. Since $C$ is an irreducible curve, we see then that
  $C\subseteq \Phi^{-k}(C)$; thus $\Phi^k(C)\subseteq C$. This yields the desired infinite
  arithmetic progression of ratio $k\leq \frac{1}{\delta}$.  If $k <1/\delta$, then the existence of this arithmetic progression would imply that
  $\delta \geq 1/k > \delta$, a contradiction.  Thus, $k =
  \frac{1}{\delta}$.
\end{proof}

\begin{remark}
\label{remark reccursive formula}
Applying the technique of the proof of Theorem~\ref{the case of
  curves} recursively in the case of endomorphisms $\Phi$ of $\bP^n$
of degree $m$ one can obtain a similar result for all projective
irreducible subvarieties $V\subseteq \bP^n$ of degree at most $D$ and
dimension at most $e$. More precisely, with the above geometric data $m,D,e$,
for every $\delta>0$ there exists a bound $M:=M(\delta, m, D,e)$ such
that if the set $S:=\{n\in\N\colon \Phi^n(\alpha)\in V(K)\}$ has
Banach density at least equal to $\delta$, then $S$ contains an
infinite arithmetic progression of ratio at most $M$. One uses induction on
the dimension $e$ of $V$, with the base case being Theorem~\ref{the
  case of curves}. Then with the use of B\'ezout's Theorem, one
controls both the degrees and the number of the irreducible components
of $V\cap \Phi^{-k}(V)$. We obtain $M(\delta,m,D,1)=\frac{1}{\delta}$ (by Theorem~\ref{the case of curves}), and then for $e\ge 2$ one  applies this technique coupled with Corollary~\ref{corollary for N} to derive the following recursive formula
$$M(\delta,m,D,e)=M\left(\frac{\delta^3}{24m^{\frac{2}{\delta}}D^2}, m, m^{\frac{2}{\delta}}D^2, e-1\right).$$
\end{remark}


\end{document}